\documentclass[10pt]{article}

\usepackage{amsmath}
\addtolength{\textheight}{8mm}
\addtolength{\textwidth}{20mm}
\addtolength{\voffset}{-10mm}
\addtolength{\hoffset}{-10mm}
\usepackage{subcaption}
\usepackage{tikz}
\usepackage{tikz-cd}
\usetikzlibrary{shapes,arrows}
\tikzstyle{line}=[draw] 
\tikzstyle{cube}=[draw] 
\usepackage{amssymb}
\usepackage{amsthm}
\usepackage{graphicx}
\usepackage{mathtools}
\usepackage{caption}
\usepackage{pst-node}
\usepackage{wrapfig}
\usepackage{pgfplots,lineno}
\pgfplotsset{compat=1.7}

\newcommand{\cI}{\mathcal I}
\newcommand{\cC}{\mathcal C}
\newcommand{\cR}{\mathcal R}
\newlength{\bw}
\newtheorem{theorem}{Theorem}
\newtheorem{lemma}[theorem]{Lemma}

\newtheorem{problem}{Problem}

\newcommand{\f}[1]{\mathbf{#1}}

\usetikzlibrary{shapes.misc}

\tikzset{cross/.style={cross out, draw=black, minimum size=2*(#1-\pgflinewidth), inner sep=0pt, outer sep=0pt},cross/.default={1pt}}

\theoremstyle{definition}

\newtheorem{example}{Example}
\newtheorem*{remark}{Remark}
\begin{document}

\title{IGA using Offset-based Overlapping Domain Parameterizations}

\author{Somayeh Kargaran\footnote{Doctoral Program Computational Mathematics, Johannes Kepler University Linz, Altenberger Stra\ss e 69, 4040 Linz, Austria \& Software Competence Center Hagenberg, GmbH (SCCH), Hagenberg, Austria}, 
Bert J\"{u}ttler\footnote{Institute of Applied Geometry, Johannes Kepler University Linz, Altenberger Stra\ss e 69, 4040 Linz, Austria}, 
Thomas Takacs\footnote{Johann Radon Institute for Computational and Applied Mathematics (RICAM), Austrian Academy of Sciences, Altenberger Stra\ss e 69, 4040 Linz, Austria \& Institute of Applied Geometry, Johannes Kepler University Linz, Austria}}

\date{\today\footnote{This manuscript is published as: Kargaran, J\"uttler, Takacs. IGA Using Offset-based Overlapping Domain Parameterizations. \emph{Computer-Aided Design}, 139: 103087, 2021. The paper is available online at: https://doi.org/10.1016/j.cad.2021.103087}}
 
\maketitle

\section*{Abstract}

  Isogeometric analysis (IGA) is a numerical method, proposed in~\cite{IGA1}, that
connects computer-aided design (CAD) with finite element
analysis (FEA).
In CAD the computational domain is usually represented by B-spline or
NURBS patches. Given a B-spline or NURBS parameterization of the domain, an isogeometric discretization is defined on the domain using the same B-spline or NURBS basis as for the domain parameterization. Ideally, such an isogeometric discretization allows an exact representation of the underlying CAD model.

CAD models usually represent only the boundary of the object. For planar domains, the CAD model is given as a collection of curves representing the boundary. Finding a suitable parameterization of the interior is one of the major
issues for IGA, similar to the mesh generation process in the FEA setting. 
The objective of this isogeometric parameterization problem is to obtain a set of patches, which exactly represent the boundary of the domain and which are parameterized regularly and without self-intersections. This can be achieved by segmenting the domain into patches which are matching along interfaces, or by covering  the domain with overlapping patches. In this paper we follow the second approach. 

To construct from a given boundary curve a planar parameterization suitable for IGA, we propose an offset-based domain parameterization algorithm.
Given a boundary curve, we obtain an inner curve by generalized offsetting. 
The inner curve, together with the boundary curve, naturally defines a ring-shaped patch with  an associated parameterization.
By definition, the ring-shaped patch has a hole, which can be covered by a multi-cell domain. Consequently, the domain is represented as a union of two overlapping subdomains which are regularly parameterized. On such a configuration, one can employ the overlapping multi-patch (OMP) method, as introduced in~\cite{KARGARAN}, to solve PDEs on the given domain.
The performance of the proposed method is reported in several numerical
examples, considering different shape properties of the given boundary curve.

\section{Introduction}

Isogeometric Analysis (IGA) is a computational approach, proposed in~\cite{IGA1} by Hughes et al., connecting computer-aided design (CAD) and finite element analysis (FEA). In the IGA framework the same basis functions are used for describing the geometry and for the numerical analysis. On a domain given by a B-spline parameterization, isogeometric test and trial functions are defined by composition of B-splines with the inverse of the domain parameterization. Thus it is possible to perform simulations directly on the geometry representation of CAD models.
An overview and summary of IGA can be found in~\cite{IGA3}.

One of the advantages of IGA -- when compared to the finite element method (FEM) -- is the ability to exactly represent computational domains from CAD using B-splines or NURBS. 
In FEM, one first has to obtain a discrete mesh from a given CAD model. This mesh generation process is, in general, expensive. In IGA, the geometry of the computational domain is often given directly from the CAD model. However, in a CAD model, the computational domain is usually given by a boundary representation, that is, by a (collection of) boundary curves in 2D or surfaces in 3D. Hence, obtaining a spline representation of the interior of a complex domain from a given CAD description of its boundary is a big challenge in IGA.

To obtain a parameterization of a domain given only by its boundary requires, in general, a segmentation of the domain as a first step. More precisely, if a collection of boundary curves is given, one needs to divide the interior into several segments, such that each segment can be parameterized using a simple patch. Hence, after the segmentation step, appropriate patch parameterization methods can be performed on each single segment. The parameterization of the entire computational domain is then given as a collection of patches, a so-called multi-patch parameterization.

In the following, we give an overview of approaches for segmentation as well as for parameterizations of computational domains. There exist several methods to segment the interior of the computational domain, e.g.,~\cite{Geo-Jinlan,LIU2015162} derived from the skeleton of the domain, \cite{BUCHEGGER20172} based on patch adjacency graphs, \cite{XU2018175} based on quad meshing or~\cite{Sajavicius2019,FALINI2019390} using template segmentations. To obtain multi-patch volume segmentations for IGA, one may apply the isogeometric segmentation pipeline~\cite{JUTTLER201474,NGUYEN2014426,10.1007/978-3-319-23315-4_3,NGUYEN201689,HABERLEITNER2017135,HABERLEITNER2019179} or~\cite{Geo-Bernhard,Geo-Gang1}. 

The accuracy of a numerical simulation method performed on the computational domain depends on the quality of the patch parameterization. Therefore, we need to apply a parameterization method, in which the resulting patches are regular, smooth and without any self-intersections. The parameterization may be obtained by optimizing some functional measuring its quality, see e.g.~\cite{Geo-Gravesen,Geo-Gang,Sajavicius2019}. Alternatively, one may obtain a patch parameterization as the solution of a suitable PDE, such as in~\cite{hinz2018iga}, in~\cite{Geo-MARTIN,Geo-Gang2} using harmonic functions, or based on a quasi-conformal Teichm\"uller map as in~\cite{Geo-Nian}. The methods in~\cite{SU201742,ZHENG201928} are developed using optimal mass transport. 
Other patch parameterization techniques include, e.g., low-rank parameterizations as in~\cite{PAN20181}, swept volume constructions as in~\cite{Geo-Juettler} or parameterizations based on offsetting as in~\cite{nguyen2014isogeometric}. The parameterization may also be derived from a polysquare or polycube representation as in~\cite{LIU2015162,yu2014optimizing,XIAO201829}, which can be seen as a pixelized or voxelized approximation of the domain.

The parameterization method depends on the requirements on the resulting domain parameterization. For instance, one usually does not allow a multi-patch parameterization where the patches overlap. Thus, most multi-patch parameterization methods result in segmentations where the patches are matching along interfaces, cf. Figure~\ref{fig:parameterization-strategies-c}. In CAD models overlapping patches may occur as a result of a Boolean union. In that case one of the patches is usually trimmed to remove the overlap. However, trimmed patches need additional care and are thus often reparameterized by a collection of regular, untrimmed patches, as in~\cite{MASSARVWI} and~\cite{HUI2005859}. Alternatively, one can apply methods that are able to handle overlapping subdomains, such as overlapping Schwarz methods, cf.~\cite{BercovierSoloveichik15_3756}, the overlapping multi-patch (OMP) method developed in~\cite{KARGARAN}, or the method introduced in~\cite{Antolin}, which employs Nitsche's method to couple overlapping patches. Note that while~\cite{MASSARVWI,HUI2005859} provide (re)parameterization strategies for domains obtained by trimming, the papers~\cite{BercovierSoloveichik15_3756,KARGARAN,Antolin} introduce solution methods for domains composed of overlapping patches without providing parameterization strategies.

In this paper we propose a new parameterization method for IGA, the so-called offset-based overlapping domain parameterization (OODP) method. The OODP method results in a parameterization formed by two overlapping patches, a ring-shaped patch with a hole and another patch covering the hole. The construction of the ring-shaped patch is based on generalized offsetting. Such a segmentation process is easy and can be performed for a considerable range of given boundary curves in 2D. Moreover, the approach can also be extended easily to 2.5D domains, which are constructed from extrusion or sweeping of a planar or surface domain, respectively.

\begin{figure}[ht]\centering
\begin{subfigure}{.15\textwidth}
 \includegraphics[width=0.99\textwidth]{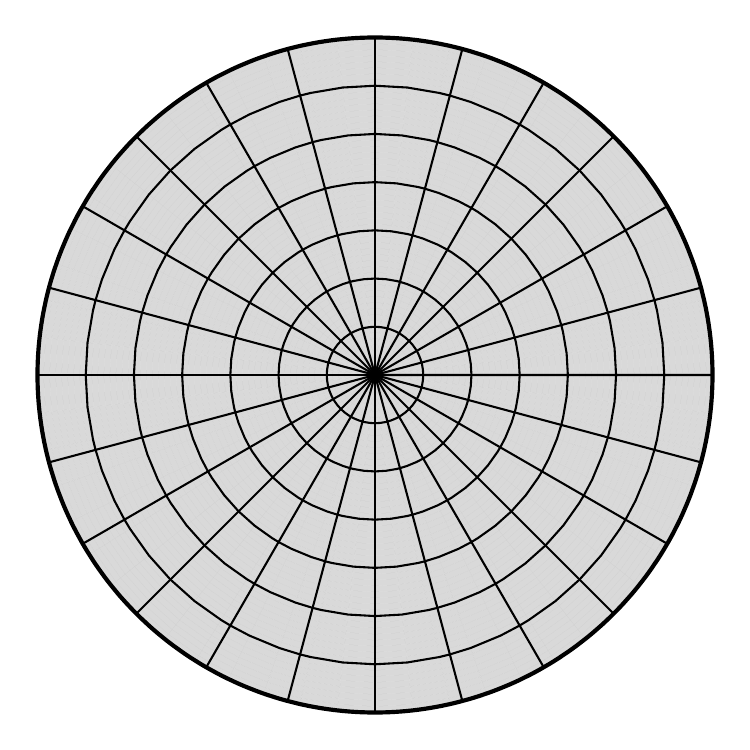}
 \subcaption{}\label{fig:parameterization-strategies-a}
\end{subfigure}
\begin{subfigure}{.15\textwidth}
 \includegraphics[width=0.99\textwidth]{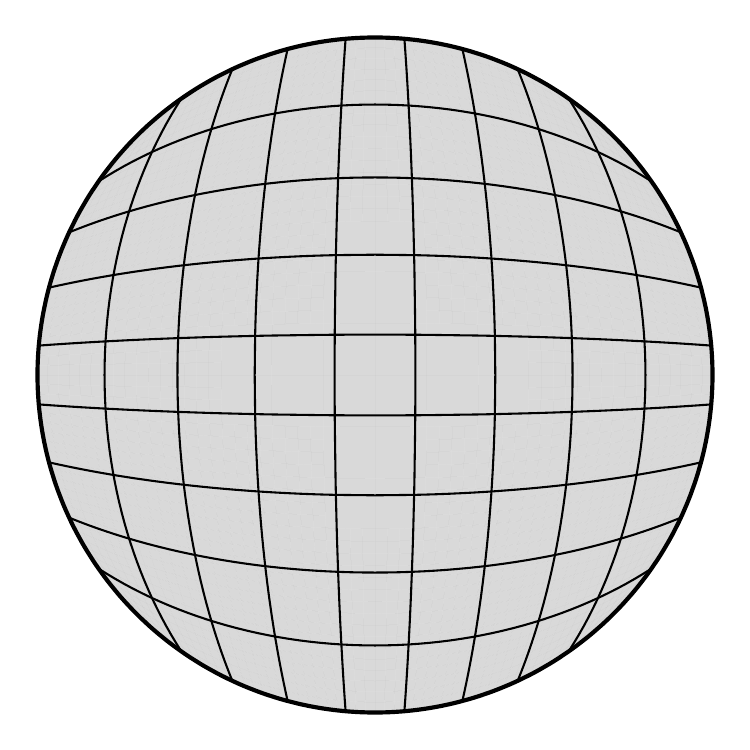}
 \subcaption{}\label{fig:parameterization-strategies-b}
\end{subfigure}
\begin{subfigure}{.15\textwidth}
 \includegraphics[width=0.99\textwidth]{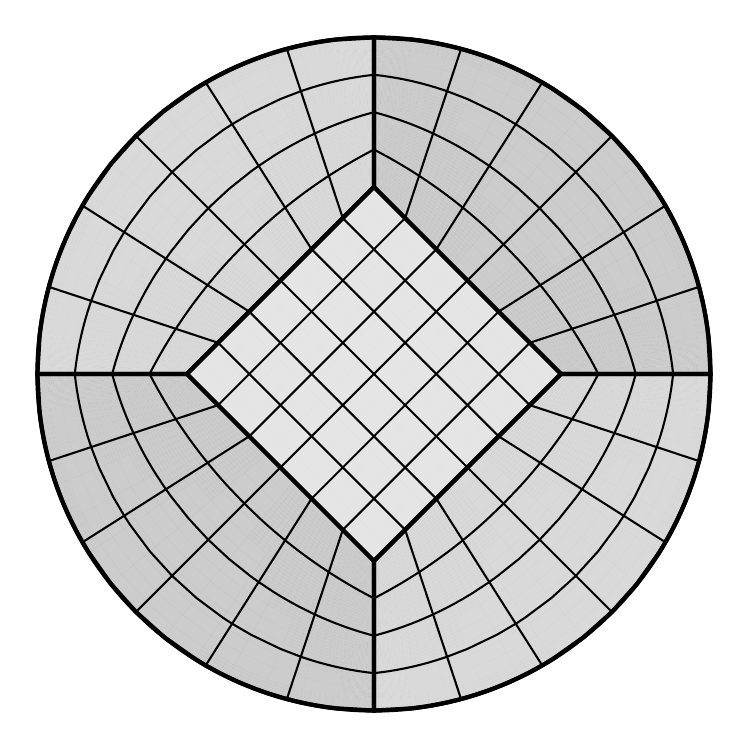}
 \subcaption{}\label{fig:parameterization-strategies-c}
\end{subfigure}
\begin{subfigure}{.15\textwidth}
 \includegraphics[width=0.99\textwidth]{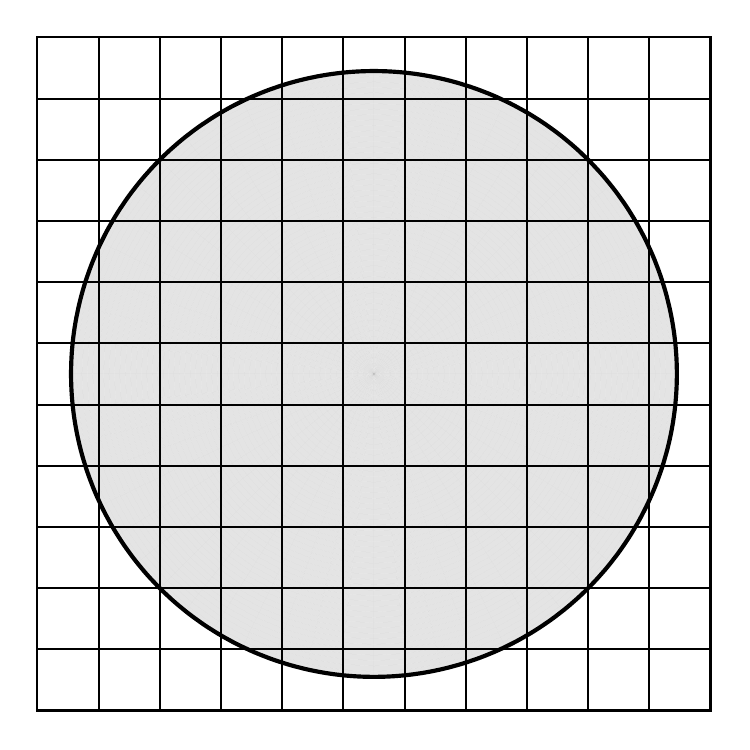}
 \subcaption{}\label{fig:parameterization-strategies-d}
\end{subfigure}
\begin{subfigure}{.15\textwidth}
 \includegraphics[width=0.99\textwidth]{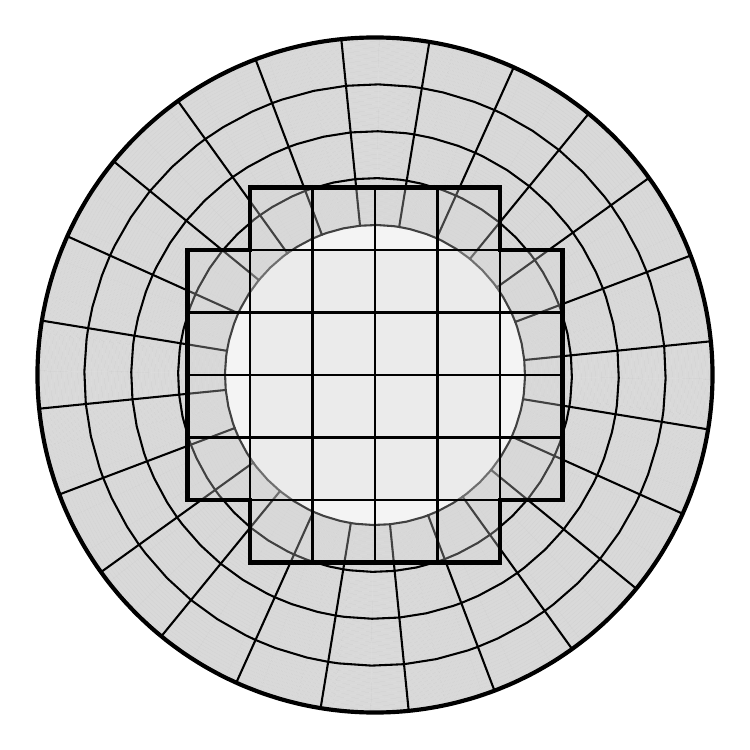}
 \subcaption{}\label{fig:parameterization-strategies-e}
\end{subfigure}
 \caption{Overview of different parameterization strategies. From left to right: polar, singular, $C^0$-matching multi-patch, trimmed/immersed boundary and offset-based overlapping domain parameterization.}
 \label{fig:parameterization-strategies}
\end{figure}

In Figure~\ref{fig:parameterization-strategies} we show examples of domain parameterizations following different strategies. The result of the OODP method is shown in Figure~\ref{fig:parameterization-strategies-e}. The domain is then represented as a union of two overlapping subdomains. Thus the OODP method results in a smaller number of subdomains than most segmentation based parameterization methods (e.g.~\cite{Geo-Jinlan,LIU2015162,BUCHEGGER20172,XU2018175,Sajavicius2019,FALINI2019390}). See Figure~\ref{fig:parameterization-strategies-c}, as an example where five patches are needed.

On the other hand, methods that parameterize a domain using a single patch, e.g.~\cite{Geo-Nian}, often contain singular points and result in strongly distorted meshes, cf. Figure~\ref{fig:parameterization-strategies-b}, where some of the mesh elements close to the boundary degenerate to triangles. Similarly, polar like parameterizations as in Figure~\ref{fig:parameterization-strategies-a} and the scaled boundary parameterization proposed in~\cite{ARIOLI2019576} result in a polar singularity in the interior that one needs to handle properly. 

The OODP method produces a ring-shaped patch which, for star-shaped domains, is similar to the scaled boundary parameterization proposed in~\cite{ARIOLI2019576}. The difference is that for the OODP method the polar singularity is removed and the resulting hole is covered with another patch. Thus, all domains that possess a scaled boundary parameterization also possess an overlapping parameterization, where both the ring-shaped patch and multi-cell domain are regularly parameterized.

Moreover, the OODP method can be seen as an alternative to the method proposed in~\cite{sanches}, based on an immersed boundary curve, cf. Figure~\ref{fig:parameterization-strategies-d}. 
There one considers a boundary curve immersed in a regular grid. The physical domain is then given by those (cut) grid cells in the interior of the given curve. The grid cells that cover the physical domain can thus be interpreted as a multi-cell domain with cut cells. Since the boundary curve cuts some of the cells, the basis functions need to be cut as well along the prescribed boundary curve. To obtain a stable discretization, some basis functions that have support near the boundary need to be modified.

In our method, instead of cutting the basis functions that have support at the boundary, we drop all functions that are close to the boundary. To be able to approximate any function on the physical domain, the region close to the boundary is covered by the ring-shaped patch. Since we can handle such overlapping patches directly, we do not need to apply any modification of the basis functions.

To summarize, the OODP method produces a domain parameterization composed of only two patches, which are regular and possess a simple structure. Since the method aims at minimizing the distortion of mesh elements and yields regular patches, the resulting discretization is directly suitable for isogeometric analysis, as shown in Section~\ref{Numerical-examples}. Due to the simple patch structure, the system matrices can be assembled efficiently.

The outline of this paper is given as follows. 
In Section~\ref{outline} we give an overview of the input and output of the OODP method. 
The OODP algorithm is described and its performance is studied experimentally in Section~\ref{sec-algorithm-1}. In Section~\ref{sec:curves-with-corners} we extend the method to boundary curves with corners. We employ the overlapping multi-patch (OMP) method, which is proposed in~\cite{KARGARAN} and briefly summarized in Section~\ref{sec:omp}, to solve second order PDEs on the resulting domain parameterizations. 
Finally, we provide numerical experiments in Section~\ref{Numerical-examples}.

\section{The offset-based overlapping domain parameterization method}\label{outline}

Given a simply-connected planar domain represented by its boundary curve, we propose an algorithm to generate a parameterization of the domain consisting of two overlapping patches. First we construct a ring-shaped patch from the boundary curve by generalized offsetting. The part of the domain which is not parameterized by this ring-shaped patch is then covered with a multi-cell domain. We call this approach \emph{offset-based overlapping domain parameterization} (OODP). We first discuss the structure of the input and output in Section~\ref{in-out-put}. Then, in Section~\ref{offset-algorithm-steps} we give a step-by-step overview of the OODP strategy. In Section~\ref{subsec:omp} we summarize how the overlapping multi-patch formulation developed in~\cite{KARGARAN} is applied to the resulting two-patch parameterization.
\subsection{Structure of input and output}\label{in-out-put}

As an input we consider a regularly parameterized, $1$-periodic, smooth, simple spline curve $  \mathbf{C}_{B}(t):  \mathbb{R}\rightarrow \mathbb{R}^2 $, with counter-clockwise orientation, representing the boundary of the domain $\Omega$. We assume $\mathbf{C}_{B} \in (\mathcal{S}^{p}_h)^2$, where $\mathcal{S}^{p}_h$ is a spline space of degree $p$ with $1$-periodic knot vector $\Xi_h$ over $\mathbb{R}$. 
From this boundary curve we construct a ring-shaped patch as shown in Figure~\ref{covering-hole} (left).
By definition, the ring-shaped patch has a hole in the middle, which we cover with a multi-cell domain.
A multi-cell domain consists of a finite set of edge-connected cells that are defined as follows
\begin{equation}\label{cells}
C_{ij} = [h_c i , h_c{(i+1)}] \times [h_c j , h_c{(j+1)}], \quad (i,j) \in \mathcal{I}_M,
\end{equation}
where $\mathcal{I}_M$ is a finite index set and $h_c>0$. The multi-cell domain $\Omega^{\cC}$ is then given as the interior of the union of all $C_{ij}$ for $(i,j) \in \mathcal{I}_M$.

Therefore, the output of our algorithm consists of a ring-shaped patch $\Omega^{\cR}$ with parameterization $\f F$, covering a neighborhood of the boundary curve, as well as a multi-cell domain $\Omega^{\cC}$ covering the hole $\Omega \setminus \Omega^{\cR}$, see Figure~\ref{covering-hole}.

\subsection{The OODP algorithm}\label{offset-algorithm-steps}

In the following we present the algorithm to construct an OODP for a smooth curve as given in Section~\ref{in-out-put}. We will consider curves with corners in more detail in Section~\ref{sec:curves-with-corners}. Therefore, we construct an open, ring-shaped patch $\Omega^{\cR}$ having the parameterization $\f{F} : \left]0,1\right[ \times \left[0,1\right[$, with 
\begin{equation*}\label{parameterization0}
\f{F} (s,t) = \f{C}_{B}(t) \cdot (1-s) + \f{C}_{I}(t) \cdot s, 
\end{equation*}
satisfying the periodicity condition $\f{F} (s,0) = \f{F} (s,1)$, where the value at $t=1$ has to be considered in the limit. By construction, the curves $\mathbf{C}_{B}$ and $\mathbf{C}_{I}$ are in the same $1$-periodic spline space $\mathbf{C}_{B},\mathbf{C}_{I} \in (\mathcal{S}^{p}_h)^2$. We want the parameterization to be regular, i.e.,
\[
\left|\det \nabla \f F (s,t)\right| \geq c > 0, \quad \mbox{for all }(s,t)\in \left]0,1\right[ \times \left[0,1\right[.
\]

The algorithm consists of the following steps.
\subsubsection*{Step I: Construct a generalized inner offset curve}

Given a boundary curve $ \f{C}_{B} $ we define a generalized inner offset curve $ \f{C}_{O} $ as follows
\begin{equation}\label{offset-curve}
\f{C}_{O}(t) = \f{C}_{B}(t) + \mu(t)\cdot \f{q}(t), \quad t \in \left[0,1\right[,
\end{equation}
where $\mu(t) > 0$ and $ \f{q} $ is a predefined quasi-normal vector to the curve $ \f{C}_{B} $, that is, a vector which is continuous in $t$, $1$-periodic, non-tangential and pointing inwards (a generalization of a normal vector). The function $\mu$ is assumed to be a $1$-periodic spline of degree $p$, which is determined by a regularized, quadratic minimization problem, which will be described later.

Note that the curve $ \f{C}_{O} $ is a generalization of a classical offset curve. Indeed, if $\f q$ is the unit normal to $ \f{C}_{B} $ and $\mu$ is chosen to be a constant, then $ \f{C}_{O} $ is a classical offset curve.
By construction, the curve $ \f{C}_{O} $ is regularly parameterized, $1$-periodic and simple.

\subsubsection*{Step II: Fit a B-spline parameterization to the ring-shaped patch}

Given $ \f{C}_{B} $ and $ \f{C}_{O} $, as constructed in Step~I, we define
\begin{equation}\label{parameterization}
\widetilde{\f{F}}(s,t) = (1-s) \f{C}_{B}(t) + s\,\f{C}_{O}(t) = \f{C}_{B}(t) + s\,\mu(t) \f{q}(t), 
\end{equation} 
where $ (s,t) \in \left]0,1\right[ \times \left[0,1\right[$.
Since $\widetilde{\f{F}}$ is, in general, not a spline parameterization, we solve the following fitting problem
\begin{eqnarray*}
&\min_{\f F \in (\f S_1^1 \otimes \f S_h^p)^2} \| \f F - \widetilde{\f{F}}\|_{L^2(\left]0,1\right[^2)}^2, &\label{fitting} \\
&\mbox{such that }\;\f F(0,t) = \widetilde{\f{F}}(0,t)\;\mbox{ and }\;\f F(s,0) = \f{F}(s,1), & \nonumber
\end{eqnarray*}
where $\f S_1^1$ is the space of polynomials of degree $1$ over $\left]0,1\right[$ and $\f S_h^p$ is the spline space of degree $p$ and mesh size $h$ containing the input curve.

Under certain conditions, as specified in Theorem~\ref{mu-max-theorem} in Section~\ref{sec-algorithm-1}, the parameterization $\widetilde{\f{F}}$ is regular. Assuming moreover that the fitting error $\| \f F - \widetilde{\f{F}}\|_{L^2}$ is sufficiently small, then the B-spline parameterization $\f{F}$ is also regular. 
Thereby, we set the ring-shaped patch to be $\Omega^{\cR} = \f F(\left]0,1\right[ \times \left[0,1\right[)$.

A short computation confirms that the minimization problem simplifies to
\begin{equation*}
\min_{\begin{array}{c}\f{C}_{I} \in (\f S_h^p)^2 \\ \f{C}_{I}(0)=\f{C}_{I}(1)\end{array}} \| \f{C}_{I}(t) - \f{C}_{O} (t)\|_{L^2(\left]0,1\right[)}^2.
\end{equation*}
\subsubsection*{Step III: Cover the hole by a multi-cell domain}

From the previous step we obtain a ring-shaped patch ${\Omega^{\cR} \subset\overline\Omega}$. We cover the hole $\Omega \setminus \Omega^{\cR}$ inside the ring, using a multi-cell domain $\Omega^{\cC}$, such that $\Omega \setminus \Omega^{\cR} \subset \Omega^{\cC} \subset \Omega$.
An example of a ring-shaped patch is illustrated in Figure~\ref{covering-hole} (left).
We cover the hole by a multi-cell domain, see Figure~\ref{covering-hole} (center), such that the patches are overlapping. 
The resulting parameterization is shown in  Figure~\ref{covering-hole} (right).
\setlength{\bw}{0.6 cm}
\begin{figure}[ht]\centering
	\footnotesize 
	\includegraphics[width=20\bw]{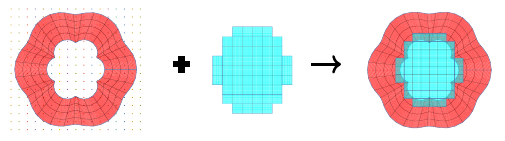}
	\caption{A ring-shaped domain $\Omega^{\cR}$ (red) and a multi-cell domain $\Omega^{\cC}$ (blue) covering the hole $\Omega \setminus \Omega^{\cR}$.}\label{covering-hole}\label{flower-grid}
\end{figure}

To explain the algorithm for creating a multi-cell domain, we consider Figure~\ref{flower-grid} (left), which depicts a ring-shaped 
patch $\Omega^\cR$ and a lattice with distance $h_c$ in both directions. This lattice creates a set of cells $C_{ij}$ as in~\eqref{cells}. All cells that satisfy 
\begin{equation*}
C_{ij} \cap ( \Omega \setminus \Omega^\cR ) \neq \emptyset
\end{equation*}
 are selected for the multi-cell domain $\overline{\Omega^\cC} = \bigcup_{i,j \in \mathcal{I}_M} C_{ij}$. 
If the lattice distance $h_c$ is sufficiently small, the obtained multi-cell domain $\Omega^\cC$ and the ring-shaped domain are overlapping and, moreover, the cells are edge-connected and there are no isolated cells. Moreover, if $h_c$ is chosen small enough, then $\Omega^\cC \subset \Omega$.
Thereby, we end up with a domain parameterization, which is a union of two overlapping subdomains (see, Figure~\ref{flower-grid} (right)).
\subsection{The overlapping multi-patch formulation}\label{subsec:omp}

From the algorithm described above, we obtain open, overlapping patches $\Omega^{\cR}$ and $\Omega^{\cC}$ covering the open domain $\Omega = \Omega^{\cR} \cup \Omega^{\cC}$. We solve a PDE on the domain $\Omega$ using the overlapping multi-patch (OMP) method as proposed in~\cite{KARGARAN}.

Assume that $\mathcal{A}(.)$ and $\mathcal{ L}(.)$ are appropriate multilinear forms derived from a PDE. The continuous problem looks as follows:
find $(u^{\cR}_0,u^{\cC}_0) \in H_0^1(\Omega^{\cR}) \times H_0^1(\Omega^{\cC}) $ and $(u^{\cR}_M,u^{\cC}_M) \in H^1(\Omega^{\cR}) \times H^1(\Omega^{\cC}) $ such that
\begin{eqnarray}\centering 
\mathcal{A}((u_0^{\cR} + u^{\cR}_M,u_0^{\cC} + u^{\cC}_M),(v^{\cR}_0,v^{\cC}_0)) &=&  \mathcal{ L}(v^{\cR}_0,v^{\cC}_0), \quad\label{variationaleq0} \\
C(u_0^{\cR} , u_0^{\cC} , u_M^{\cR} , u_M^{\cC} ) &=& \f 0 \label{coupling-cond},
\end{eqnarray}
for all $ (v^{\cR}_0,v^{\cC}_0) \in H_0^1(\Omega^{\cR}) \times H_0^1(\Omega^{\cC}) $.
Here, \eqref{variationaleq0} corresponds to a variational formulation of a PDE and~\eqref{coupling-cond} is a coupling condition on the coupling boundaries $\partial\Omega^{\cR} \cap \Omega$ and $\partial\Omega^{\cC} \cap \Omega$. The formulation is derived in more detail in Section~\ref{sec:omp}.

For discretizing the variational equation~\eqref{variationaleq0}, we use a standard Galerkin approach. The coupling condition~\eqref{coupling-cond} is discretized using a collocation scheme at the coupling boundaries.
Since the patch $\Omega^{\cR}$ has a $1$-periodic parameterization, we use isogeometric basis functions based on standard periodic B-splines to discretize the functions $u^{\cR}_0$, $v^{\cR}_0$ and $u^{\cR}_M$ on $\Omega^{\cR}$.
The functions $u^{\cC}_0$, $v^{\cC}_0$ and $u^{\cC}_M$ can be discretized using standard B-splines.

\section{Constructing a generalized inner offset curve}\label{sec-algorithm-1}

In this section we provide details on Step~I of the OODP algorithm, constructing a generalized inner offset curve. In Section~\ref{Algorithm-1} we present an algorithm to construct the curve, as given in~\eqref{offset-curve}, from a given boundary curve and quasi-normal vector along the boundary.
In Section~\ref{Algorithm-1-example} we study the performance of the algorithm on several examples. Moreover, we consider the dependence of the algorithm on its parameters. In Section~\ref{comp-q(t)} we discuss the construction of suitable quasi-normal vectors.

\subsection{The generalized offsetting algorithm}\label{Algorithm-1}

In the following we propose an algorithm to find, for any given, $1$-periodic boundary curve $ \f{C}_B $ and quasi-normal vector $\f{q}$, a generalized inner offset curve $ \f{C}_{O} $, such that the resulting ring-shaped patch $\tilde{\f F}$ is regular and smooth. After providing an overview of the algorithm and its parameters, we discuss the influence of those parameters on the resulting patch. If the parameters are selected properly, the patch parameterization $\tilde{\f F}$ is regular.

The generalized offsetting algorithm needs as input a 
\begin{itemize}
	\item $1$-periodic boundary curve $\f{C}_{B}$,
	\item $1$-periodic quasi-normal vector $\f{q}$ to $\f{C}_{B}$,
	\item offsetting parameters $0<c<1$, $0<d$, and
	\item regularization parameters $\alpha\geq 0$ and $\beta\geq 0$.
\end{itemize}
Before we introduce the algorithm, we need to specify when the resulting offset curve is suitable for constructing a ring-shaped patch: we call an inner offset curve $\f{C}_{O}$ \emph{valid}, if it is $1$-periodic and the resulting patch
\[
 \widetilde{\f{F}}(s,t) = (1-s) \f{C}_{B}(t) + s\,\f{C}_{O}(t)
\]
is regularly parameterized, completely inside the boundary curve and possesses no foldovers. In that case we also refer to the patch as \emph{valid}.

The steps of the algorithm are given by: 
\begin{enumerate} 
	\item Compute 
	\begin{equation}\label{eq:def-mumax}
	\mu_{\max}(t) = \left\{
	\begin{array}{ll}
	\frac{\det(\f{C}_B'(t),\f q(t)) }{\det(\f{q}(t) ,\f{q'}(t))} & \mbox{ if }  0 < \det(\f{q}(t) ,\f{q'}(t)) \\
	\infty & \mbox{ otherwise,}
	\end{array}\right.
	\end{equation}
	for all $t \in [0,1[ $. 
	\item Set $ \mu_{\mathrm{target}}(t) = \min \{ c \cdot \mu_{\max}(t),d\}$.
	\item Find $\mu \in \mathcal{S}^{p}_h$ minimizing the quadratic energy functional
	\begin{equation*}\label{min-func}
	\|\mu - \mu_{\mathrm{target}}\|_{L^2([0,1])}^2 + \alpha \|\f{C}_{O}^{(1)} \|_{L^2([0,1])}^2 + \beta\|\f{C}_{O}^{(2)}\|_{L^2([0,1])}^2\rightarrow \min
	\end{equation*}
	where $\f{C}_{O}^{(i)}$ denotes the $i$-th derivative of 
	\begin{equation*}
	 \f{C}_{O} = \f C_B + \mu \f q .
	 \end{equation*}
	\item If the resulting generalized offset curve $\f{C}_{O}$ is valid, the algorithm terminates. Otherwise, we alternately shrink one of the parameters $d$, $h$, $\alpha$ or $\beta$ and repeat steps~2 to~4.
\end{enumerate}
Step~3 is a (regularized) quadratic minimization problem, which results in a linear system to be solved. Even though the system may need to be solved several times, its system matrix has to be assembled only once. The question remains, when the termination criterion in step~4 is satisfied. A necessary condition is that the resulting patch is regular. The regularity of the patch depends only on the boundedness of $\mu$ as summarized in the following theorem.

\begin{theorem}\label{mu-max-theorem}
	Assume that the patch parameterization~$\tilde{\f{F}}$ is given as in~\eqref{parameterization}
	where $\f{C}_B$ is a $1$-periodic, counter-clockwise oriented, simple curve and $\f q$ is a corresponding quasi-normal vector. 
	Moreover, assume that $ \mu$ is a $1$-periodic and continuous function. 
	If for all $t$ we have $0 < \mu(t) < \mu_{\max}(t)$, where $\mu_{\max}$ is defined as in~\eqref{eq:def-mumax}, then the parameterization $\tilde{\f{F}}$ is regular.
\end{theorem}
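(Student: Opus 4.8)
The plan is to compute the Jacobian determinant of $\tilde{\f F}$ explicitly and read off its sign directly from the hypothesis $0<\mu(t)<\mu_{\max}(t)$. Writing $\tilde{\f F}(s,t)=\f C_B(t)+s\,\mu(t)\,\f q(t)$, the two columns of $\nabla\tilde{\f F}$ are $\partial_s\tilde{\f F}=\mu\,\f q$ and $\partial_t\tilde{\f F}=\f C_B'+s(\mu'\f q+\mu\f q')$. Using bilinearity and antisymmetry of the $2\times2$ determinant, so that $\det(\f q,\f q)=0$, the $\mu'$-term drops out and I obtain
\[
\det\nabla\tilde{\f F}(s,t)=\mu\,\det(\f q,\f C_B')+s\,\mu^2\,\det(\f q,\f q')=\mu\bigl(\det(\f q,\f C_B')+s\,\mu\,\det(\f q,\f q')\bigr).
\]
Since $\mu>0$ by assumption, it suffices to control the sign of the bracketed factor, and I claim this bracket is strictly negative everywhere.

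First I would pin down the sign of $\det(\f C_B',\f q)$, which is where the geometric hypotheses enter. Because $\f C_B$ is counter-clockwise oriented and simple, its interior lies to the left, and the inward normal $\f n$ is the unit tangent rotated by $+\pi/2$, so that $\det(\f C_B',\f n)>0$. Decomposing the non-tangential, inward-pointing quasi-normal as $\f q=\lambda\,\f C_B'+\nu\,\f n$ with strictly positive normal component $\nu>0$ yields $\det(\f C_B',\f q)=\nu\,\det(\f C_B',\f n)>0$, equivalently $\det(\f q,\f C_B')<0$. This also makes the definition of $\mu_{\max}$ consistent: in the finite branch it is a ratio of two positive quantities, hence positive, so the condition $0<\mu<\mu_{\max}$ is meaningful.

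Next I split on the sign of $\det(\f q,\f q')$, exactly matching the two cases in the definition~\eqref{eq:def-mumax}. If $\det(\f q,\f q')\le 0$, then $\mu_{\max}=\infty$ and the term $s\,\mu\,\det(\f q,\f q')$ is $\le 0$ for $s\ge 0$, so the bracket is $\le\det(\f q,\f C_B')<0$. If $\det(\f q,\f q')>0$, then $\mu_{\max}=\det(\f C_B',\f q)/\det(\f q,\f q')$, and the strict inequality $\mu<\mu_{\max}$ multiplied by $s>0$ gives $s\,\mu\,\det(\f q,\f q')<-s\,\det(\f q,\f C_B')$, whence the bracket is $<(1-s)\det(\f q,\f C_B')<0$ for $s<1$. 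In either case $\det\nabla\tilde{\f F}<0$ throughout $\left]0,1\right[\times\left[0,1\right[$, so the Jacobian never vanishes and keeps a constant sign, which is the asserted regularity. A uniform bound $|\det\nabla\tilde{\f F}|\ge c>0$ then follows from continuity and $1$-periodicity in $t$: viewing the determinant as a linear function of $s$ with strictly negative values at both endpoints $s=0$ and $s=1$, compactness of the $t$-circle and the strict pointwise gap $\mu<\mu_{\max}$ give a positive infimum.

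The one genuinely delicate point is the sign normalization $\det(\f C_B',\f q)>0$; everything else is the direct determinant expansion above together with the elementary inequality $\mu<\mu_{\max}$. I therefore expect the main care to go into justifying that the counter-clockwise orientation and the inward, non-tangential nature of $\f q$ force the correct sign, so that the two cases of~\eqref{eq:def-mumax} line up with the two sign regimes of $\det(\f q,\f q')$.
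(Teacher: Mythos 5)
Your proof is correct and follows essentially the same route as the paper's: the identical expansion $\det\nabla\tilde{\f F}=\mu\det(\f q,\f C_B')+s\mu^2\det(\f q,\f q')$, linearity in $s$, and the same case split on the sign of $\det(\f q,\f q')$ matching the two branches of $\mu_{\max}$. If anything you are slightly more thorough than the paper, which merely asserts $\det(\f q,\f C_B')<0$ where you justify it via the decomposition $\f q=\lambda\,\f C_B'+\nu\,\f n$ with $\nu>0$, and which does not address the uniform bound $\left|\det\nabla\tilde{\f F}\right|\geq c>0$ that you obtain by compactness and periodicity.
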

\begin{proof}
	See \ref{prooftheorem1}.
\end{proof} 
Assuming that the ring-shaped patch is regular, the offset curve $\f{C}_O$ is valid if it is simple (without self-intersections), oriented counter-clockwise and inside the domain. This yields the following result.
\begin{lemma}
	Let $\alpha=\beta=0$. Then there exists an $h>0$ and a $d>0$, both sufficiently small, such that $\f{C}_O$ is valid and the algorithm terminates.
\end{lemma}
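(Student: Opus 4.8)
The plan is to choose $d$ so small that the generalized offset effectively reduces to a constant-distance offset of $\f{C}_B$, and then to argue that validity is a property stable under such small perturbations. Throughout, $\alpha=\beta=0$ means that Step~3 computes $\mu$ as the $L^2$-projection of $\mu_{\mathrm{target}}$ onto $\mathcal{S}^{p}_h$.

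First I would establish that $\mu_{\max}$ is bounded below by a positive constant. Since $\f{C}_B$ is regular and $\f{q}$ is a nonvanishing quasi-normal vector that is non-tangential and points inward along the counter-clockwise curve, the numerator $\det(\f{C}_B',\f{q})$ is continuous, $1$-periodic and of constant positive sign, hence bounded below by some $a>0$, while the denominator $\det(\f{q},\f{q}')$ is continuous and $1$-periodic, hence bounded above by some $M>0$. Wherever $\det(\f{q},\f{q}')>0$ this gives $\mu_{\max}\ge a/M$, and elsewhere $\mu_{\max}=\infty$, so $\mu_{\max}(t)\ge m:=a/M>0$ for all $t$. Consequently, for every $d<c\,m$ we have $c\,\mu_{\max}(t)>d$ for all $t$, so that $\mu_{\mathrm{target}}\equiv d$ is constant. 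Since the constant $d$ lies in the periodic spline space $\mathcal{S}^{p}_h$ (constants being in the span of the B-spline partition of unity), its $L^2$-projection is itself, giving $\mu\equiv d$ for any mesh size $h$. In particular $0<\mu(t)=d<m\le\mu_{\max}(t)$, so Theorem~\ref{mu-max-theorem} applies and the patch $\widetilde{\f{F}}$ is regular.

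It remains to verify the three properties that, together with regularity, yield validity: that $\f{C}_O=\f{C}_B+d\,\f{q}$ is simple, inside $\Omega$, and counter-clockwise, for all sufficiently small $d$. For ``inside'', I would use that $\f{q}$ points inward: for each $t$ there is $\varepsilon_t>0$ with $\f{C}_B(t)+\varepsilon\,\f{q}(t)\in\Omega$ for $0<\varepsilon<\varepsilon_t$, and a compactness argument over the (compact) parameter circle produces a uniform threshold below which $\f{C}_O(t)\in\Omega$ for all $t$. For orientation, the signed enclosed area depends continuously on the curve in the $C^1$-topology, and $\f{C}_O\to\f{C}_B$ in $C^1$ as $d\to0$ because $\f{q}$ and $\f{q}'$ are bounded; hence the positive area of the counter-clockwise curve $\f{C}_B$ is inherited by $\f{C}_O$ for small $d$.

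The main obstacle is simplicity, which I would handle by splitting the parameter pairs into an off-diagonal and a near-diagonal regime. Off the diagonal, i.e.\ for circular distance $|t-t'|\ge\delta$, simplicity of $\f{C}_B$ together with compactness gives $|\f{C}_B(t)-\f{C}_B(t')|\ge\rho>0$, while $\|\f{C}_O-\f{C}_B\|_\infty\le d\,\|\f{q}\|_\infty$, so $\f{C}_O(t)\neq\f{C}_O(t')$ once $d<\rho/(2\,\|\f{q}\|_\infty)$. Near the diagonal, the uniform lower bound $|\f{C}_O'|=|\f{C}_B'+d\,\f{q}'|\ge\ell>0$ (valid for small $d$ by regularity of $\f{C}_B$) and a uniform upper bound $|\f{C}_O''|\le L$ give, via Taylor expansion, the estimate $|\f{C}_O(t)-\f{C}_O(t')|\ge|t-t'|\,(\ell-\tfrac{L}{2}|t-t'|)$, which is positive for $|t-t'|<\delta:=\ell/L$, forcing local injectivity. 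Taking $d$ below the minimum of the four thresholds (namely $c\,m$, the inside threshold, $\rho/(2\,\|\f{q}\|_\infty)$, and the threshold guaranteeing $|\f{C}_O'|\ge\ell$) makes $\f{C}_O$ valid. Finally, since for $\alpha=\beta=0$ the algorithm repeatedly shrinks $d$ (among the available parameters $d$ and $h$), after finitely many steps $d$ falls below this threshold, the test in Step~4 succeeds, and the algorithm terminates.
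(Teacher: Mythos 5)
Your proof is correct, but it takes a genuinely different route from the paper's. The paper's own proof is only two sentences: for $d$ small enough the target curve $\f C_B + \mu_{\mathrm{target}}\,\f q$ is simple, counter-clockwise oriented and inside the domain, and then for $h$ small enough the spline minimizer $\mu$ of $\|\mu - \mu_{\mathrm{target}}\|_{L^2([0,1])}^2$ yields a valid $\f C_O$ --- implicitly relying on the $L^2$-projection being close to $\mu_{\mathrm{target}}$ in a strong enough (essentially $C^1$) sense to inherit validity, a stability step the paper leaves unargued. You eliminate that approximation step entirely: your uniform lower bound $\mu_{\max}\geq m>0$ (positivity of $\det(\f C_B',\f q)$ from non-tangentiality and inward pointing, boundedness of $\det(\f q,\f q')$) forces $\mu_{\mathrm{target}}\equiv d$ once $d<c\,m$, and since periodic B-splines reproduce constants, the projection is exact and $\mu\equiv d$ for \emph{every} $h$; Theorem~\ref{mu-max-theorem} then gives regularity, and validity reduces to properties of a constant-distance generalized offset, which you verify directly (insideness, orientation via continuity of the signed area in $C^1$, simplicity via the off-/near-diagonal split). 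Your route buys a self-contained argument with explicit thresholds and shows the $h$-smallness in the statement is actually superfluous; the paper's route buys brevity and reflects the general (non-constant target) situation the algorithm faces in practice. Two minor caveats, both at or above the paper's own level of rigor: your near-diagonal Taylor estimate uses a bound on $\f C_O'' = \f C_B'' + d\,\f q''$, i.e.\ (piecewise) $C^2$ regularity of $\f q$, whereas the paper states only continuity of $\f q$ (with $\f q'$ implicit in $\mu_{\max}$); this is repaired without extra smoothness by the integral estimate $|\f C_O(t)-\f C_O(t')| \geq |t-t'|\,\bigl(\ell - \omega(|t-t'|)\bigr)$ with $\omega$ the modulus of continuity of $\f C_O'$. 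Likewise, the compactness argument for insideness needs the transversality of $\f q$ to the boundary to produce a uniform entering depth --- pointwise thresholds $\varepsilon_t$ alone do not pass to a uniform one, since the base points lie on $\partial\Omega$ rather than in the open set --- but this is a standard collar argument and the paper itself simply asserts the corresponding fact.
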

\begin{proof}
	For $d>0$ small enough the curve $\f C_B + \mu_{\mathrm{target}} \f q$ is simple, oriented counter-clockwise and inside the domain. Consequently, if $h$ is small enough, the approximation $\mu$ minimizing $\|\mu - \mu_{\mathrm{target}}\|_{L^2([0,1])}^2$ yields a valid inner offset curve $\f{C}_O = \f C_B + \mu \f q$, as described above.

\end{proof} 
However, for fixed, non-zero regularization parameters $\alpha$ and $\beta$ the algorithm is not guaranteed to terminate. Consequently, the regularization parameters are shrunk in step~4.

The width of the ring-shaped patch (the distance between the curves $\f C_B$ and $\f C_O$) depends on the choice of the parameters $c$ and $d$, larger constants yielding a wider patch. However, too large constants may result in a patch which is not valid. Hence, the goal is to create a valid ring-shaped patch which is sufficiently wide, at least as wide as the desired mesh resolution for the numerical solution.

\begin{remark}
In the following we discuss the choice of parameters in the generalized offsetting algorithm:
\begin{itemize}
	\item If $c$ is close to zero, the ring-shaped patch will be too narrow. On the other hand, if $c$ is close to one, the patch parameterization is almost singular. Therefore, in practice, we set $c=\frac12$. 
    \item For each fixed $t$ there is an upper bound $d_{\max}(t)$, such that the ray
    \[
     \{ \f C_B(t) + s \, \f q(t) : 0 < s < d_{\max}(t) \}
    \]
    is completely inside the domain. Thus, the parameter $d$ should be chosen smaller than $d_{\max}(t)$ for any $t\in[0,1]$. Such an upper bound can be computed efficiently at sampled points and used as an initial guess. To avoid foldovers of the patch, i.e., self-intersections of the offset curve $\f C_O$, it may be necessary to select an even smaller value for $d$. Note that the constant $d$ in step~2 may also be replaced by a $1$-periodic function depending on the parameter $t$.

    \item Thus, in step~4 of the algorithm we shrink $d$ by setting $d' = \lambda \cdot d$, with $\lambda \in \left]0,1\right[$ fixed, and decrease the value of the regularization parameters by a factor of $10$. We moreover assume that $h$ is already small enough.
	\item The shape of the resulting ring-shaped patch depends strongly on the regularization parameters $ \alpha $ and $ \beta $ and the choice of the constants $c$ and $d$.
	If $c$ and $d$ are small enough and $\alpha = \beta = 0$, then the parameterization is regular.
	However, non-zero values of $\alpha$ and $\beta$ often give better results.
	\item In step 3 of the algorithm we assume $\mu \in \mathcal{S}^{p}_h$. The choice of the underlying spline space has an effect on the shape and smoothness of the resulting ring-shaped patch. Hence, selecting a higher degree and/or spline space with smaller mesh size $ h $, may improve the results.
\end{itemize}
\end{remark}

In the following section we consider several examples to show how we can create suitable parameterizations using the generalized offsetting algorithm. The goal is to find a quasi-normal vector and regularization parameters such that the resulting ring-shaped patch covers a large part of $\Omega$.

\subsection{A study of the dependence of the generalized offsetting algorithm  on its parameters}\label{Algorithm-1-example}

We use the generalized offsetting algorithm to obtain suitable, valid ring-shaped parameterizations for several given boundary curves.
In Example~\ref{example-with-normal} the function $\f q$ is chosen to be the normal vector. This does not always result in a satisfactory parameterization, as can be seen in Example~\ref{example-with-normal-2}. Hence, a suitable quasi-normal vector to the outer curve needs to be defined. 
In Example~\ref{star-shape-curve} we consider a star-shaped domain and we discuss the importance of the choice of $\f q$ to obtain a regular ring-shaped patch of sufficient width.
In Section~\ref{comp-q(t)}, we discuss how to improve the choice of $\f q$ locally for a given boundary curve. This is considered
in Example~\ref{remark-example}. In all examples we consider either an unregularized minimization problem in step~3 of the algorithm ($\alpha=\beta=0$) or we penalize only first derivatives ($\alpha>0$, $\beta=0$) or only second derivatives ($\alpha=0$, $\beta>0$).

\begin{remark}
	In the following examples, we assume that the function $\mu$ is a periodic B-spline of degree $p=3$. The size $h$ of the spline space for $\mu$ is set to $0.02$ for all the examples. 
\end{remark}

\begin{example}\label{example-with-normal} 
	We consider the peanut-shaped curve $\f C_B^1$, which is shown in Figure~\ref{peanut-result00}. In Figure~\ref{peanut-result101} we show the resulting patch parameterization where the function $\f q$ is the normal vector $\f n_B$ to the curve $\f C_B^1$ and $\alpha $ and $\beta $ are set to zero.
	In Figure~\ref{peanut-result102} the resulting parameterization is depicted for when we set $\f q = -\f C_{B}^1$. This is a suitable choice, since the domain is star shaped with respect to the origin.
	The patch parameterizations in Figures~\ref{peanut-result11} and \ref{peanut-result12} are obtained for $ \alpha = 10^{-7}$  and  $ \beta = 10^{-7} $, respectively. As one can see, having $\beta > 0$ yields a smoother parameterization for $\f q = \f n_B^1$.
	In this case, the choice $\f q = -\f C_{B}^1$ as in Figure~\ref{peanut-result102} yields the largest patch, which can be most easily covered by a multi-cell domain.
\end{example}
 \begin{figure}[ht]\centering 
	\begin{subfigure}{.45\textwidth}
		\centering
		\includegraphics[width=0.5\textwidth]{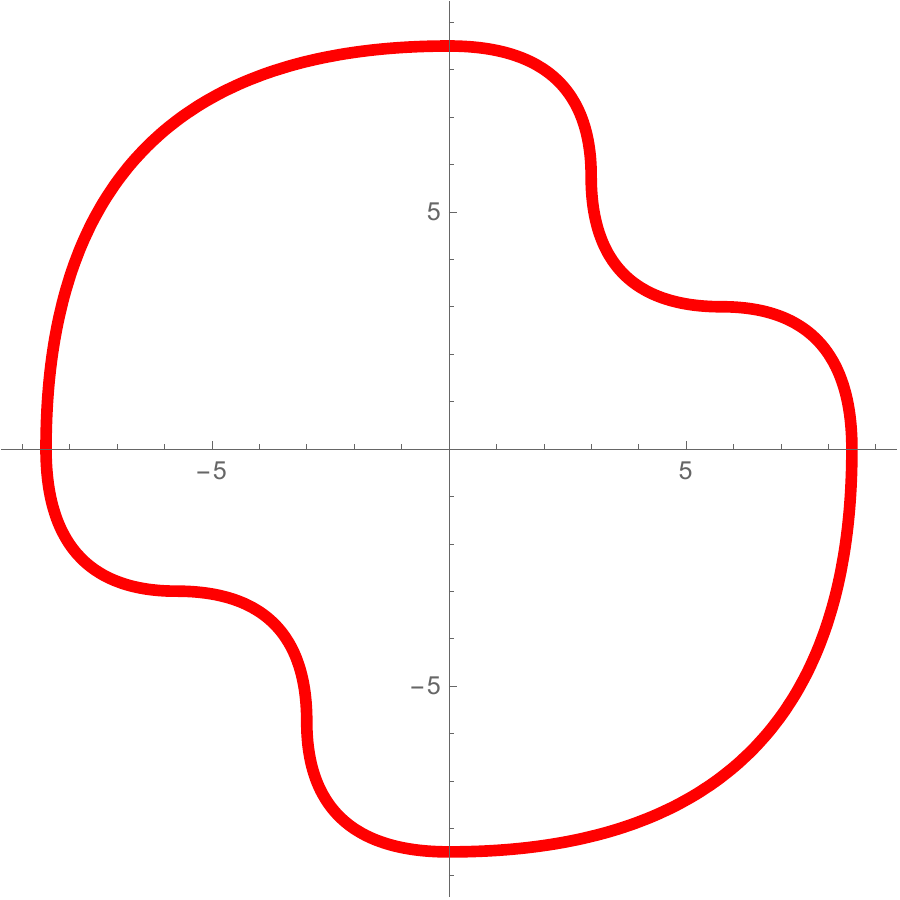}
		\subcaption{Peanut-shaped curve $\f C_B^1$.}\label{peanut-result00}
	\end{subfigure}\\
	\begin{subfigure}{.45\textwidth}
		\centering
		\includegraphics[width=0.5\textwidth]{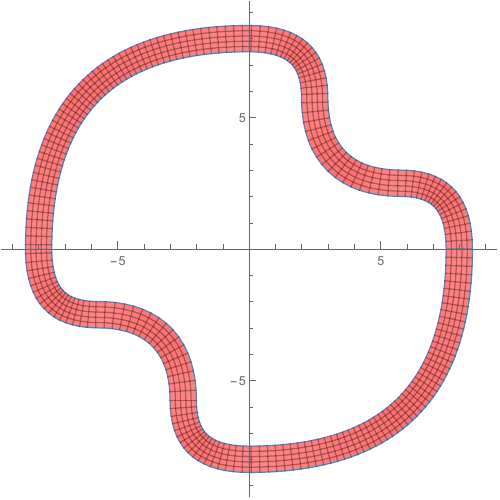}
		\subcaption{$d = 1$, $c = \frac{3}{4}$, $ \alpha = \beta = 0$, $\f q = \f n_B^1$.}\label{peanut-result101}
	\end{subfigure}%
	\begin{subfigure}{.45\textwidth}
		\centering
		\includegraphics[width=0.5\textwidth]{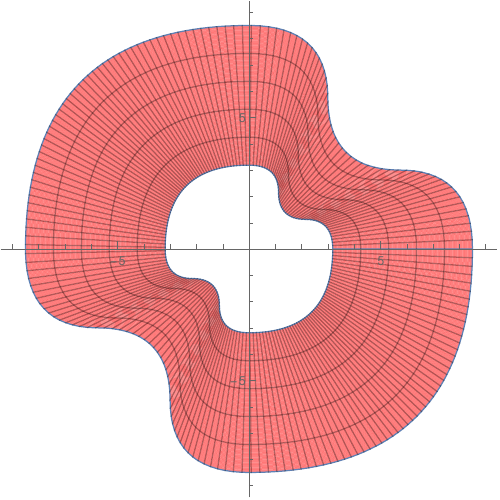}
		\subcaption{$d = 0.625 $, $c = \frac{3}{4} $, $ \alpha = \beta = 0$, $\f q = -\f C_B^1$.}\label{peanut-result102}
	\end{subfigure}\\
	\begin{subfigure}{.45\textwidth}
		\centering
		\includegraphics[width=0.5\textwidth]{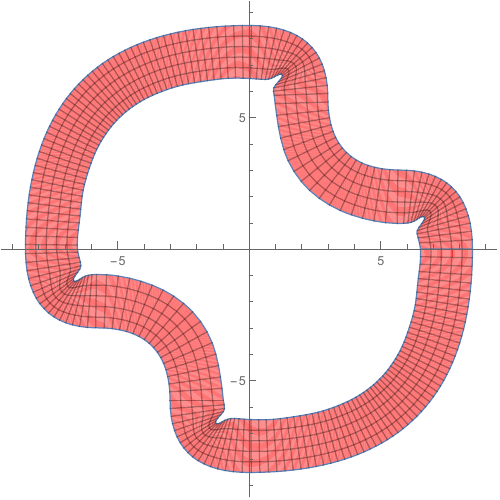}
		\subcaption{$d = 2 $, $c = \frac{3}{4}$, $ \alpha = 10^{-7}$, $\f q = \f n_B^1$.}\label{peanut-result11}
	\end{subfigure}%
	\begin{subfigure}{.45\textwidth}
		\centering
		\includegraphics[width=0.5\textwidth]{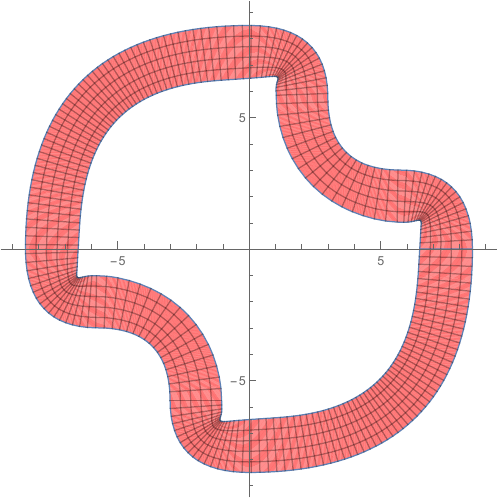}
		\subcaption{$d = 2 $, $c = \frac{3}{4}$, $ \beta= 10^{-7}$, $\f q = \f n_B^1$.}\label{peanut-result12}
	\end{subfigure}
	\caption{Different parameterizations of a domain inside a peanut-shaped curve~$\f C_B^1$. Note that in this figure (and in the following) the mesh is given for visualization purposes and does not correspond to the underlying B\'ezier mesh. Here~$ \f n_B^1 $ stands for the normal vector to the curve~$\f C_B^1$.}\label{peanut-result1}
\end{figure}

\begin{example}\label{example-with-normal-2}
	
	We consider a curve $\f C_B^2$ shaped like planet B-$612$, as shown in Figure~\ref{B612-curve00}.
	The curve is named in reference to the home of the little prince, from the book of the same name by Antoine de Saint-Exup\'ery.
	In Figure~\ref{B612-curve102} we visualize the resulting patch parameterization for $\f q = \f n_B^2$, $c=\frac{1}{2}$, $d=\frac{1}{2}$ and
	$\alpha = \beta = 0$.
	One can see that the parameterization is not valid, since it has self-intersections.
	Therefore, we set $c=\frac{1}{2}$, $d=\frac{1}{5}$ and $\alpha = \beta = 0$ in Figure~\ref{B612-curve201}  and $ \beta = 10^{-6}$ in Figure~\ref{B612-curve202}.
	The resulting parameterization in Figure~\ref{B612-curve202} does not have any self-intersections, but one part of the patch is very slim. In this case, it is not possible to cover the hole with a multi-cell domain, without having a very small mesh size.
	Hence, we need to define a different function $\f q$, such that the resulting parameterization becomes wider.
	Hence, instead of $\f q$ being the normal vector, we define $\f q = - \f C_B^2$. This is a suitable quasi-normal vector, since the curve is star-shaped with respect to a ball at the origin.
	The result is illustrated in Figure~\ref{B612-results}.
	In this case, covering the hole is easily possible.
\end{example}

\begin{figure}[ht]\centering
	\footnotesize 
	\begin{subfigure}{.45\textwidth}
		\centering
		\includegraphics[width=0.5\textwidth]{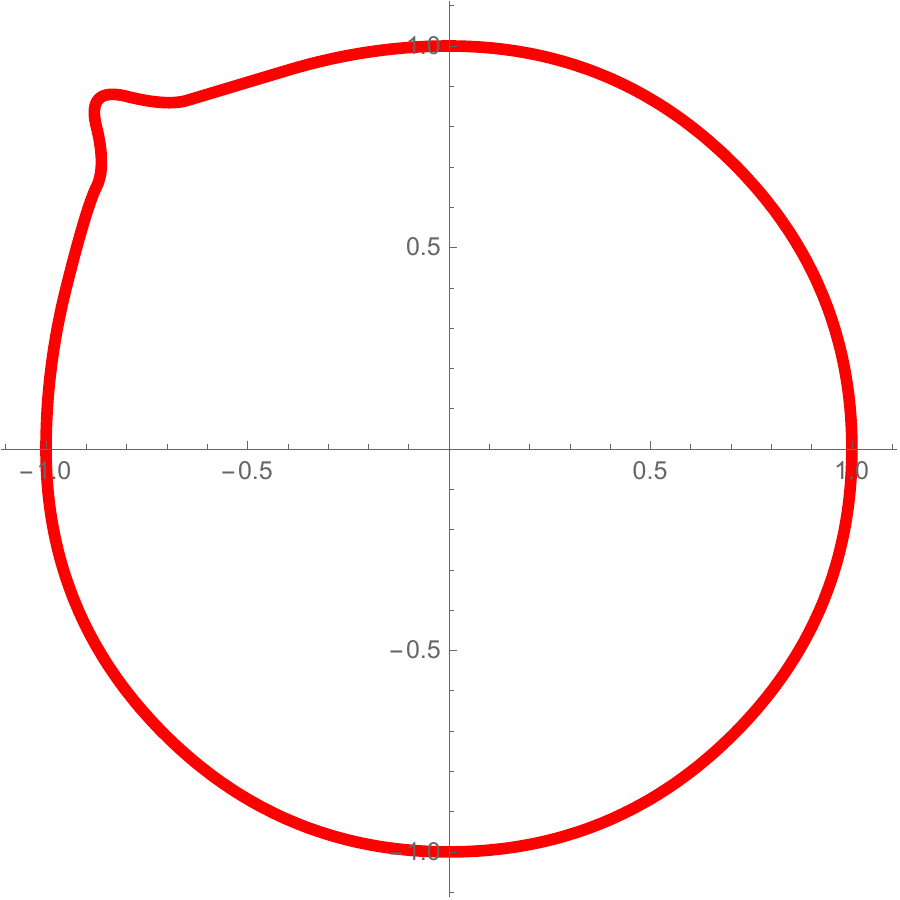}
		\subcaption{Planet B-$612$ curve $\f C_B^2$.}\label{B612-curve00}
	\end{subfigure}\\
	\begin{subfigure}{.45\textwidth}
		\centering
		\includegraphics[width=0.5\textwidth]{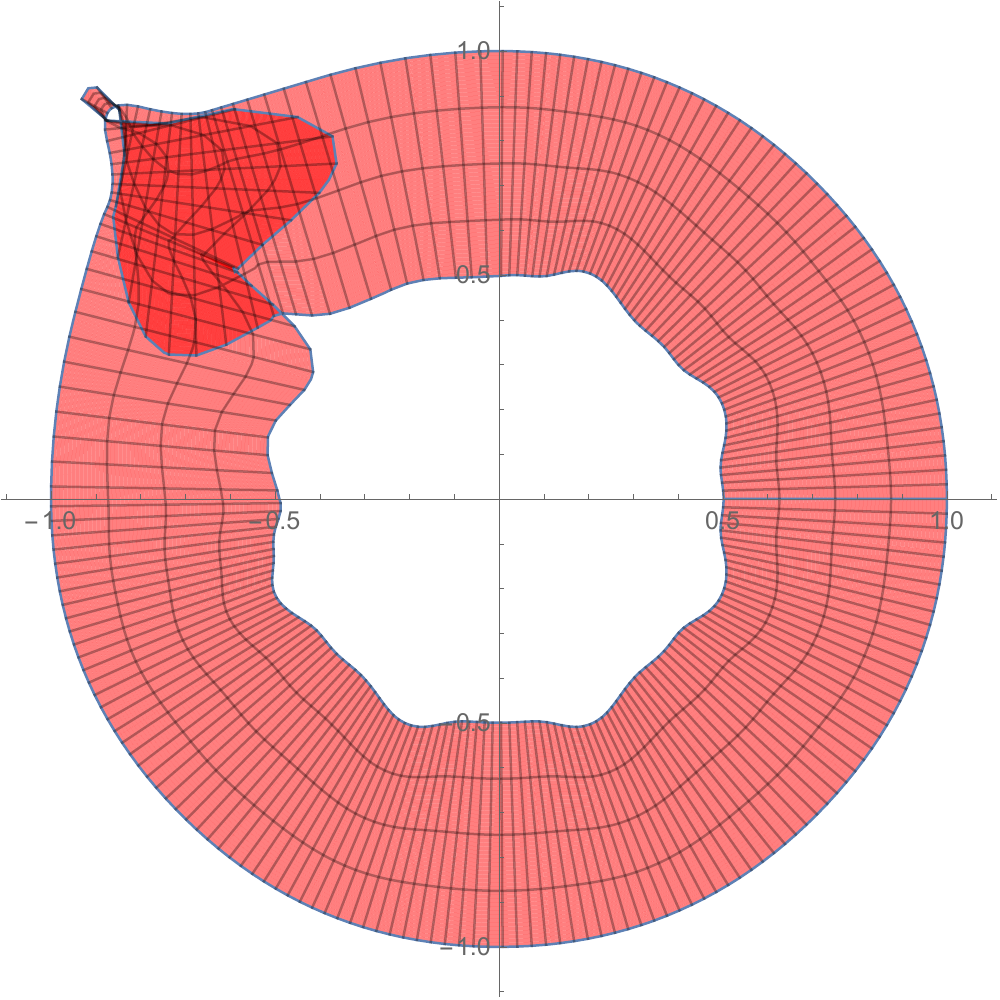}
		\subcaption{$d = \frac{1}{2} $, $c = \frac{1}{2}$, $ \alpha =\beta =0$, $\f q = \f n_B^2$.} \label{B612-curve102}  
	\end{subfigure}%
	\begin{subfigure}{.45\textwidth}
		\centering
		\includegraphics[width=0.5\textwidth]{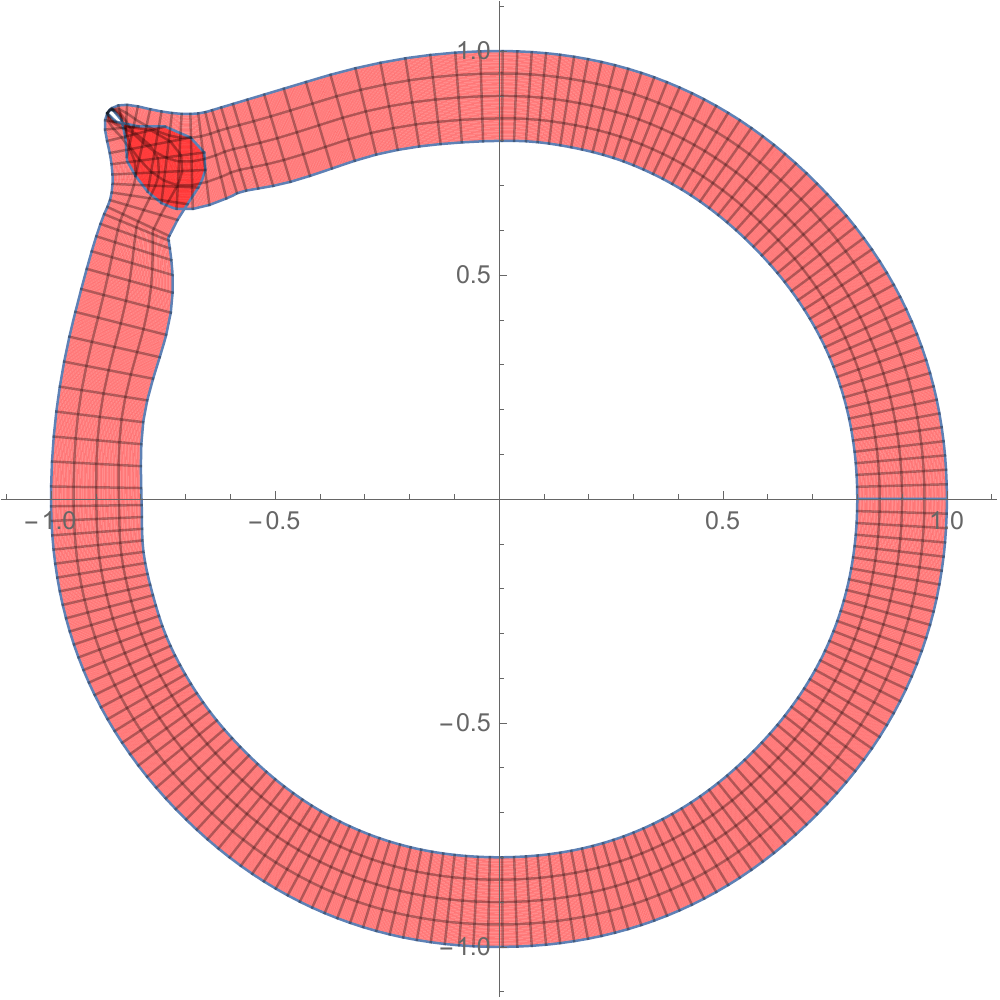}
		\subcaption{ $d = \frac{1}{5} $, $c = \frac{1}{2}$, $ \alpha =\beta =0$, $\f q = \f n_B^2$. }\label{B612-curve201}
	\end{subfigure}\\
	\begin{subfigure}{.45\textwidth}
		\centering
		\includegraphics[width=0.5\textwidth]{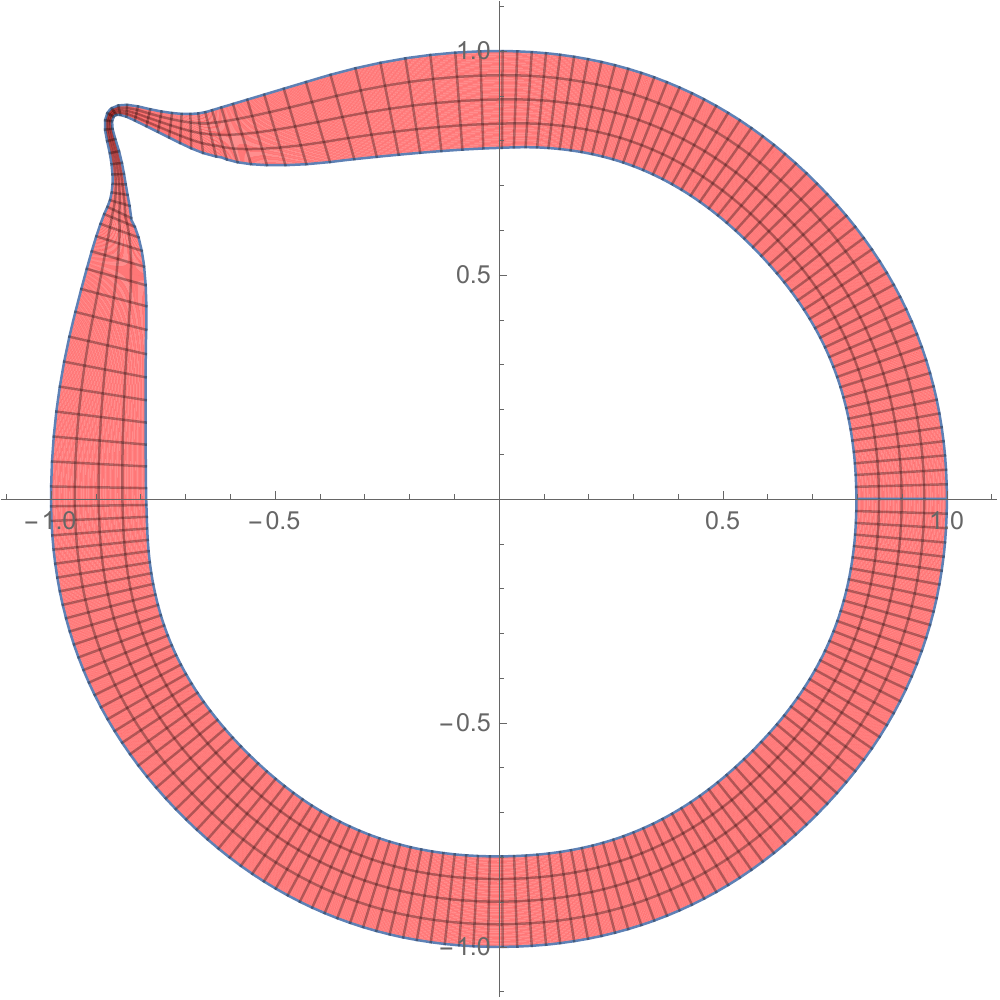}
		\subcaption{ $d = \frac{1}{5} $, $c = \frac{1}{2}$, $\beta = 10^{-6}$, $\f q = \f n_B^2$.}\label{B612-curve202} 
	\end{subfigure}%
	\begin{subfigure}{.45\textwidth}  
		\centering
		\includegraphics[width=0.5\textwidth]{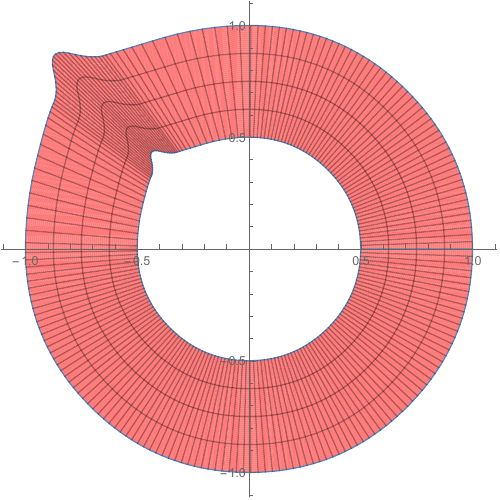}
		\subcaption{   $d = \frac{1}{2} $, $c = \frac{1}{2}$, $\alpha = \beta = 0$, $\f q= -\f C_{B}^2 $.} \label{B612-results}
	\end{subfigure}
	\caption{Different parameterizations of a domain inside the curve $\f C_B^2$ shaped like planet B-$612$.}\label{B612-curve000}
\end{figure}

\begin{example}\label{star-shape-curve}
	We consider the boundary curve $\f C_{B}^3$ of a star-shaped domain, as illustrated in Figure~\ref{star-comp00}.
	Here, we compare the parameterizations resulting from different choices of $\f q$ and different choices for the parameters $\alpha$ and $\beta$. The results using $\f q$ as the normal vector are illustrated in Figures~\ref{star-comp01} and~\ref{star-comp11}. The results using a more general quasi-normal vector (the normal vector of a circle $\f n_C $) are depicted in Figures~\ref{star-comp02} and~\ref{star-comp12}. As remarked earlier, the quasi-normal vector yields better results. Moreover, having $\beta > 0$ yields a smoother patch parameterization.
\end{example}
\begin{figure}[ht]\centering
	\footnotesize 
	\begin{subfigure}{.45\textwidth}  
		\centering
		\includegraphics[width=0.5\textwidth]{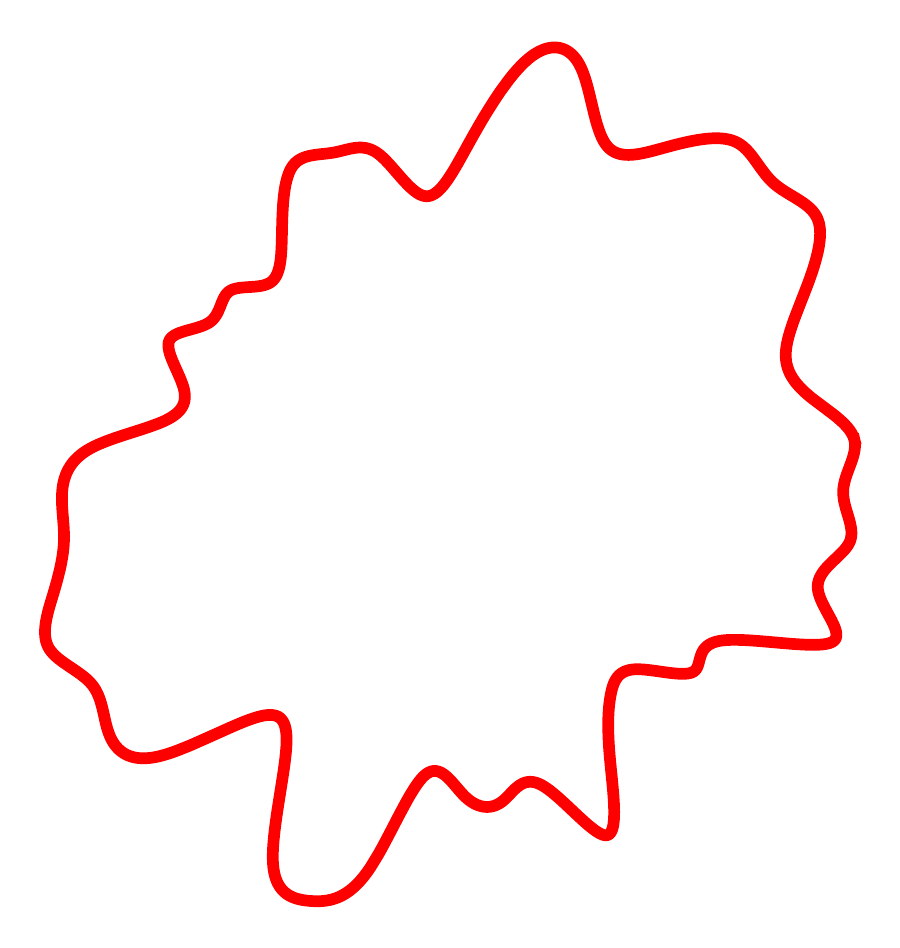}
		\subcaption{ Star-shaped curve $\f C_{B}^3$.} \label{star-comp00}
	\end{subfigure}\\
	\begin{subfigure}{.45\textwidth}  
		\centering
		\includegraphics[width=0.5\textwidth]{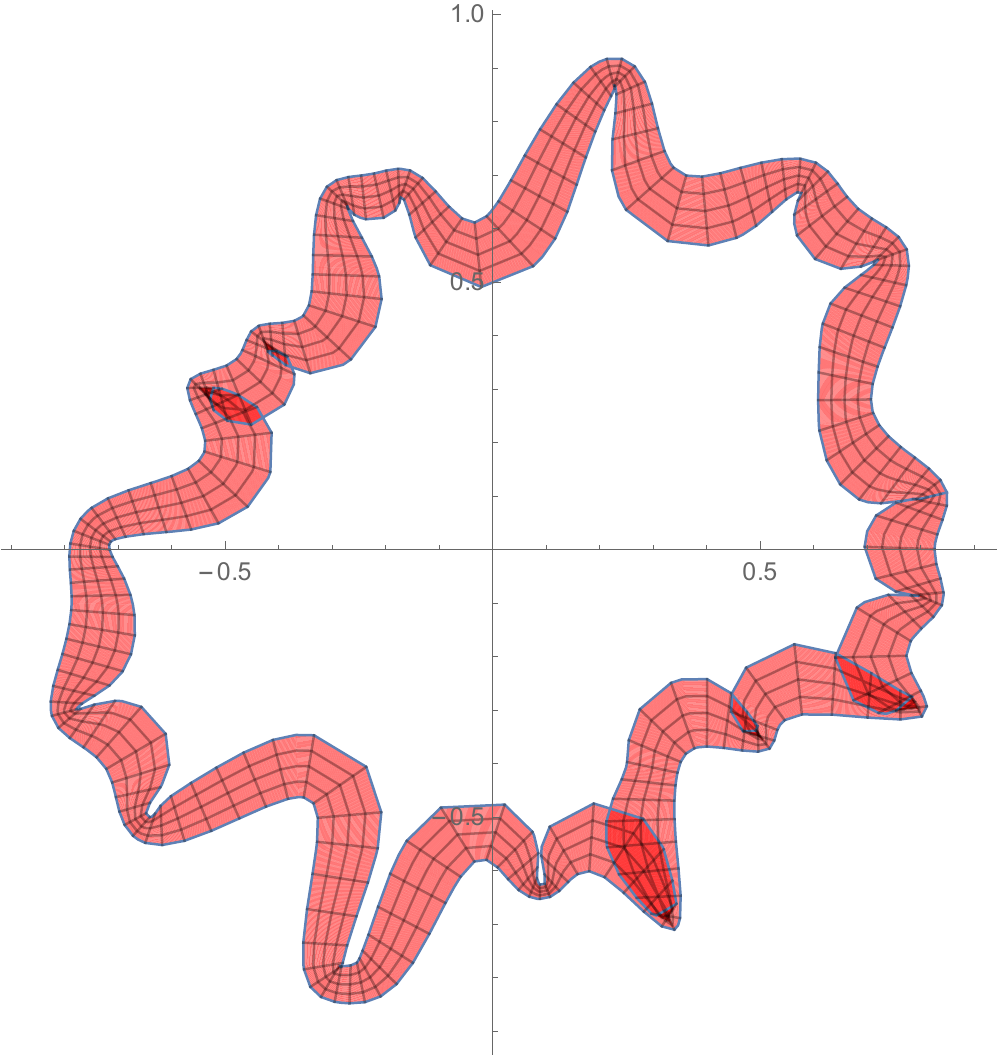}
		\subcaption{$d = \frac{1}{9} $, $c = \frac{1}{2}$, $ \f q = \f n_B^3 $, $\alpha = 3\times10^{-9}$.}\label{star-comp01}
	\end{subfigure}%
	\begin{subfigure}{.45\textwidth} 
		\centering 
		\includegraphics[width=0.5\textwidth]{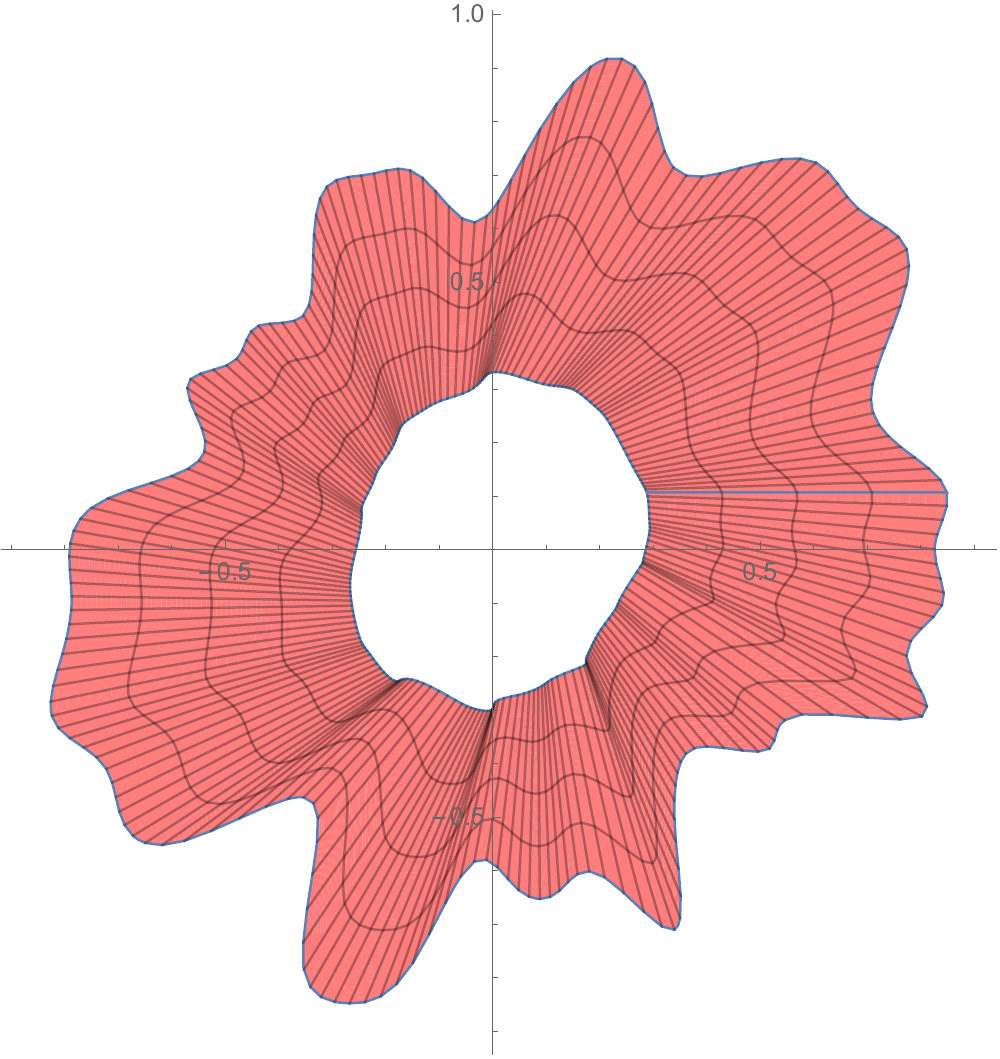}
		\subcaption{ $d = 1 $, $c = \frac{1}{2}$, $ \f q = \f n_C$, $\alpha = 9\times10^{-3}$.}\label{star-comp02}
	\end{subfigure}\\
	\begin{subfigure}{.45\textwidth}  
		\centering	\includegraphics[width=0.5\textwidth]{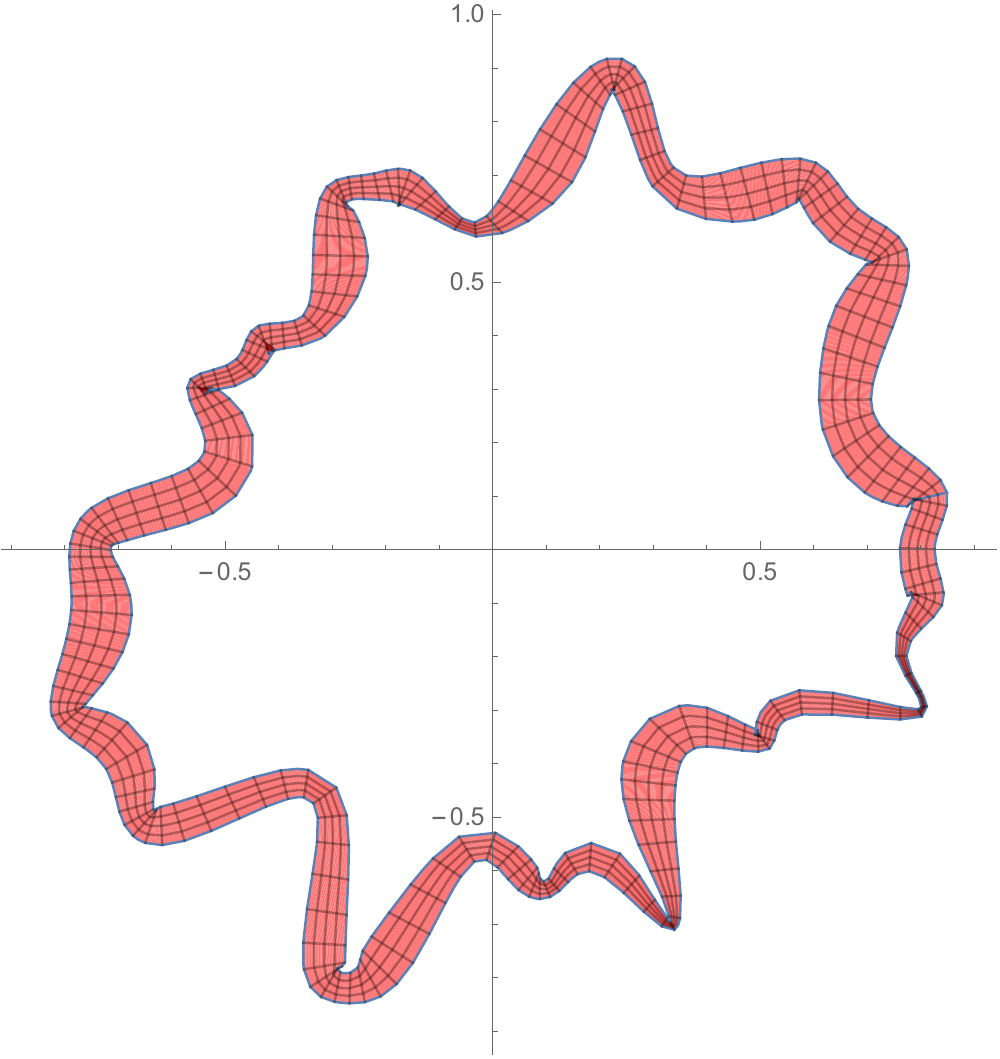}
		\subcaption{ $d = \frac{1}{9} $, $c = \frac{1}{2}$, $ \f q = \f n_B^3 $,  $\beta= 3\times 10^{-9}$.}\label{star-comp11}
	\end{subfigure}%
	\begin{subfigure}{.45\textwidth}  
		\centering
		\includegraphics[width=0.5\textwidth]{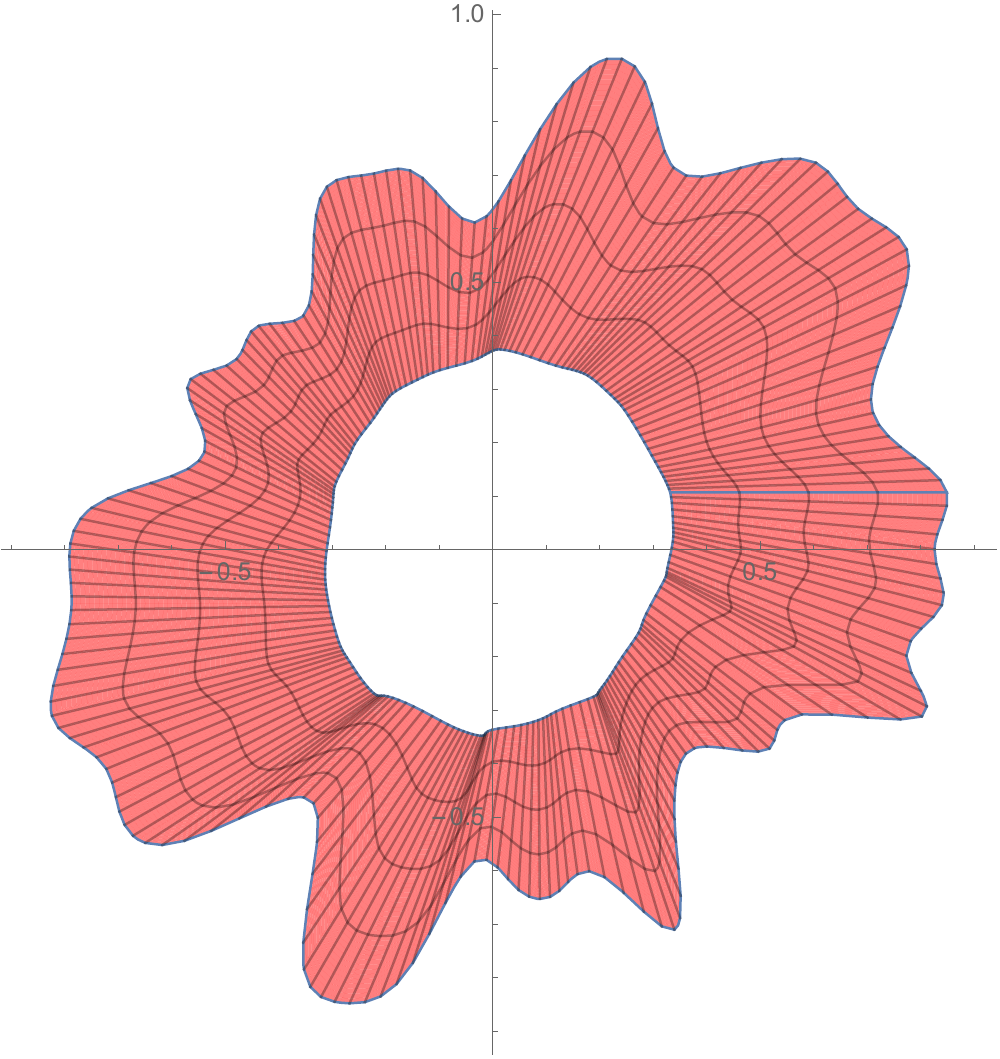}
		\subcaption{$d = 1$, $c = \frac{1}{2}$, $ \f q = \f n_C $, $\beta = 9\times 10^{-5}$. }\label{star-comp12}
	\end{subfigure}
	\caption{Applying the generalized offsetting algorithm on a star-shaped boundary curve.}
\end{figure}

\subsection{A discussion on constructions for the quasi-normal vector $\f q$}\label{comp-q(t)}

The goal of this section is to obtain a smooth quasi-normal vector for a boundary curve with fine details, as illustrated in Figure~\ref{new-ex01}. Such a quasi-normal vector can be computed as the normal vector of a smoother curve as it is shown in blue in Figure~\ref{new-ex02}. To this end we go through the following steps, given a boundary curve $\f C_B$:
\begin{itemize}
	\item We approximate $\f C_B$ by a smoother curve $\f C_S$, see Figures~\ref{new-ex01} and \ref{new-ex02}.
	\item We compute the normal vectors $\f n_B$ and $\f n_S$ to the curves $\f C_B$ and $\f C_S$, respectively.
	\item If, for each $t$, the normal vector $\f n_S(t)$ is a valid quasi-normal direction at $\f C_B(t)$, we set $\f q(t) = \f n_S (t)$.
	\item If this is not the case, we repeat the process with another approximation $\f C_S^*$ with $\|\f n_S^* - \f n_B \| < \|\f n_S - \f n_B \|$.
\end{itemize}

For a better understanding we present the following example.
\begin{example}\label{remark-example}
	We consider the boundary curve $\f C_B^4$ shown in Figure~\ref{new-ex01}.
	Moreover, in Figure~\ref{new-ex02}, the boundary curve $\f C_B^4$ is depicted together with a regularized, smoother curve $\f C_S$. The corresponding normal vectors are denoted by $\f n_B^4$ and $\f n_S$, respectively. The curve $\f C_S$ is an approximation of $\f C_B^4$, having fewer geometric details.
	In Figures~\ref{new-ex1} and  \ref{new-ex2} we depict the parameterizations where $\f q$ is chosen to be $\f q= \f n_B^4$ and $\f q= \f n_S$, respectively.
	In the latter case, the resulting patch parameterization is valid.
	Moreover, the hole in the middle can be covered easily using a multi-cell domain.
\end{example}

\begin{figure}[ht]\centering
	\footnotesize 
	\begin{subfigure}{.45\textwidth} 
		\centering
		\includegraphics[width=0.45\textwidth]{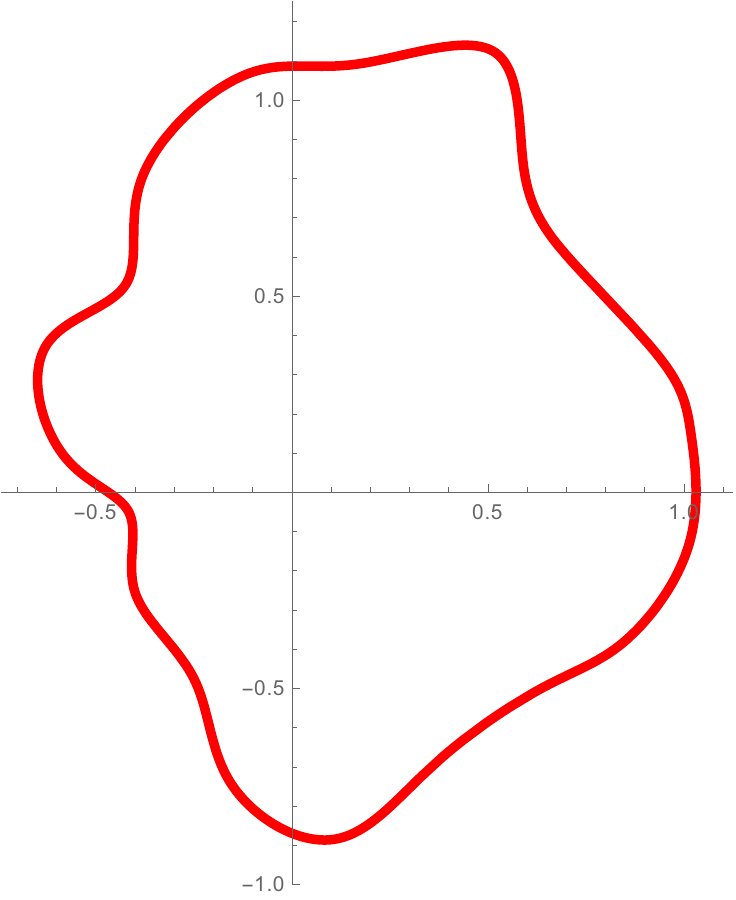}
		\subcaption{The given boundary curve $\f C_B^4$.}\label{new-ex01}
	\end{subfigure}%
	\begin{subfigure}{.45\textwidth} 
		\centering
		\includegraphics[width=0.45\textwidth]{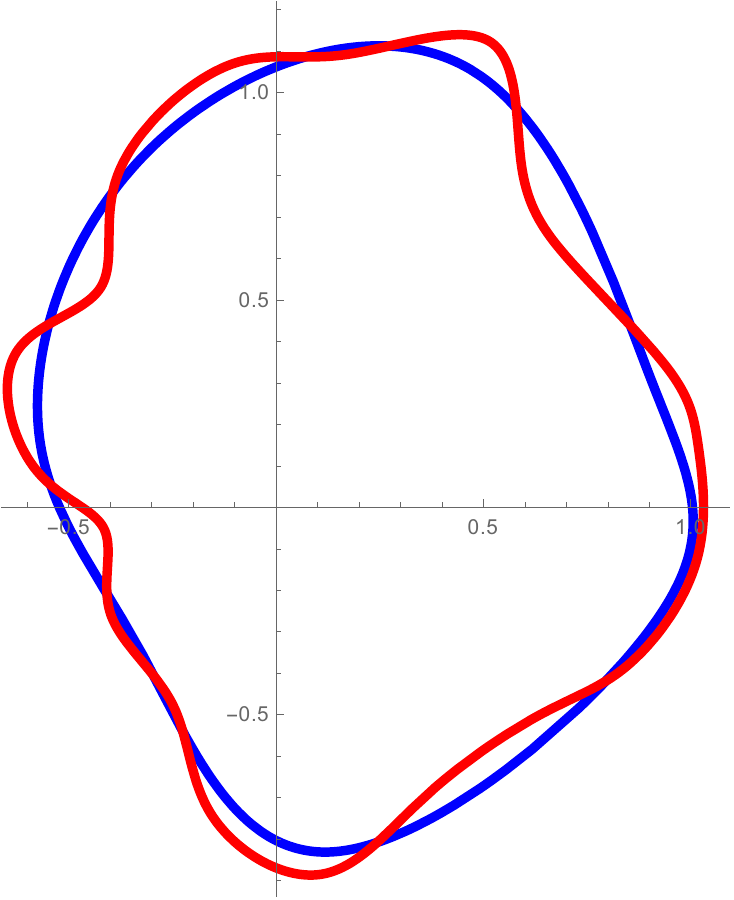}
		\subcaption{$\f C_B^4$ plotted together with $\f C_S$.}\label{new-ex02}
	\end{subfigure}\\
	\begin{subfigure}{.45\textwidth} 
		\centering
		\includegraphics[width=0.5\textwidth]{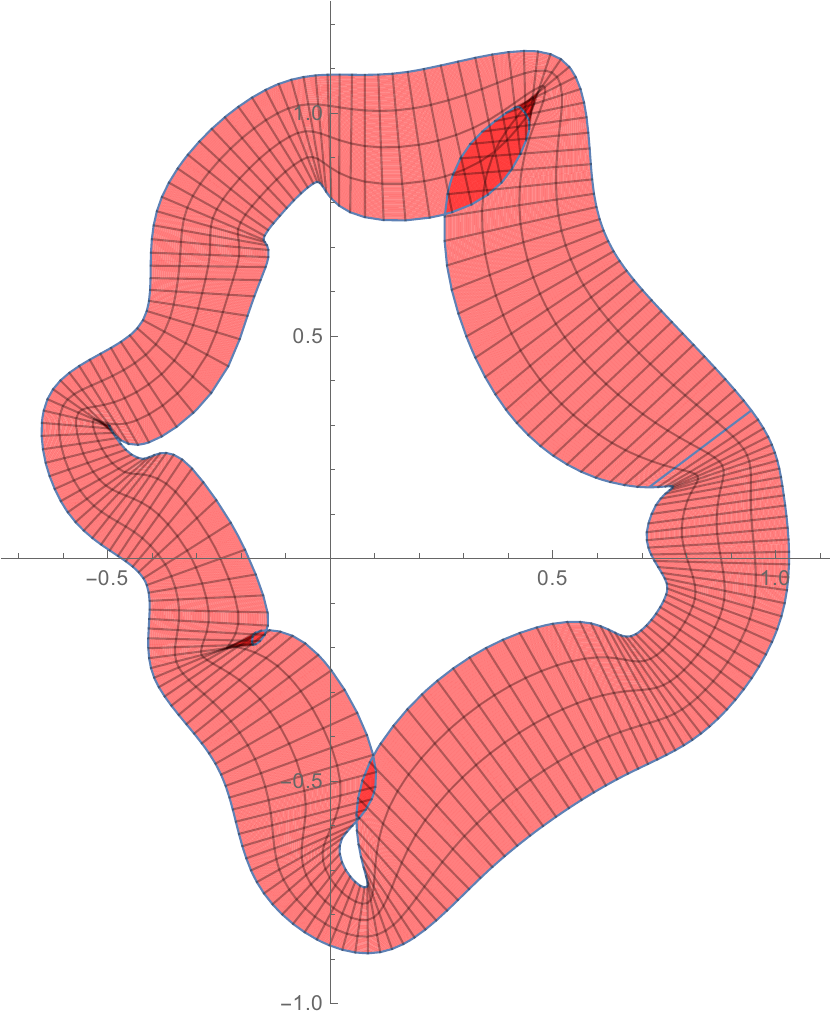}
		\subcaption{$d = \frac{2}{5}$, $c = \frac{1}{2}$, $ \f q = \f n_B^4 $, $\alpha = 10^{-7}$. }\label{new-ex1}		
	\end{subfigure}%
	\begin{subfigure}{.45\textwidth} 
		\centering
		\includegraphics[width=0.5\textwidth]{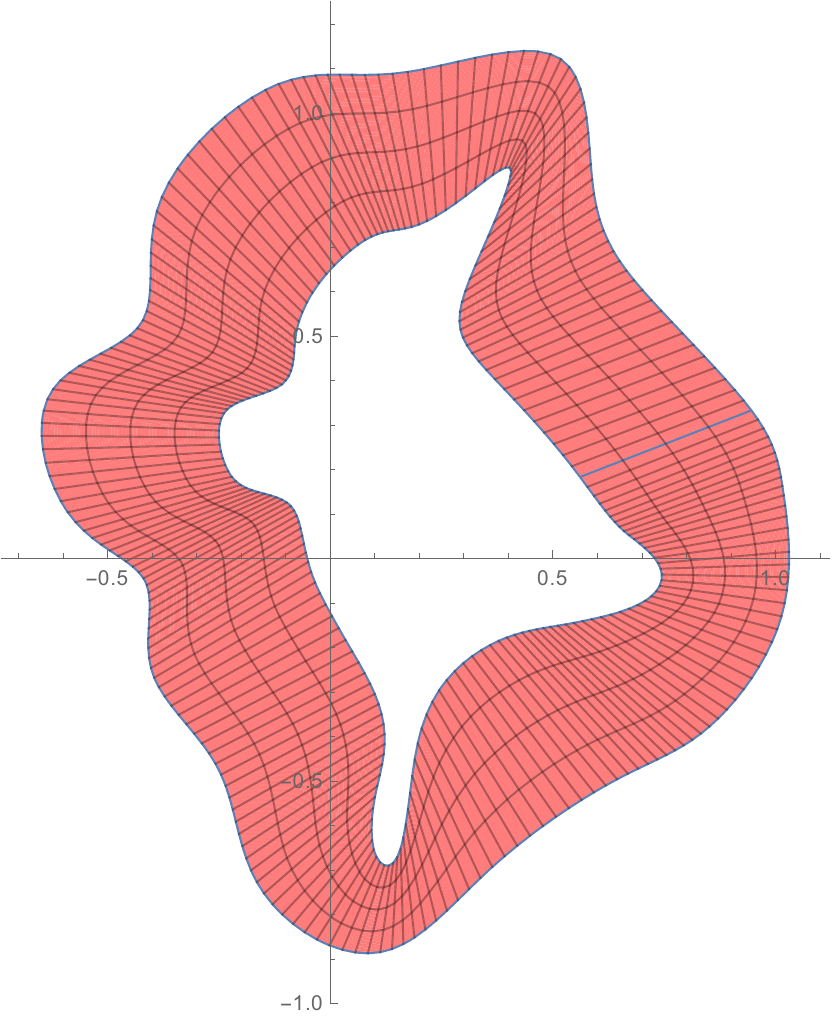}
		\subcaption{ $d = \frac{2}{5}$, $c = \frac{1}{2}$, $ \f q = \f n_S $, $\alpha = 10^{-7}$.}\label{new-ex2}
	\end{subfigure}
	\caption{Constructing a quasi-normal vector using a smooth approximation of a given boundary curve.}\label{new-ex}
\end{figure}

\section{Treatment of corners on the boundary}\label{sec:curves-with-corners}

In this section, we consider boundary curves with corners, both convex and non-convex. See Figure~\ref{b-curve} for a visualization. We explain how to extend the generalized offsetting algorithm to such curves. The algorithm yields a set of patches for each curve segment and each corner. The collection of patches can then be interpreted as a single ring-shaped spline manifold.

\setlength{\bw}{1cm}
\begin{figure}[ht]\centering
	\includegraphics[width=3\bw]{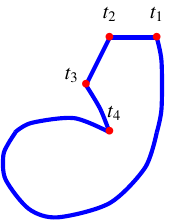}
	\caption{An example of a boundary curve $ \mathbf{C}_{B}$ with corners.}\label{b-curve}
\end{figure}

We assume to have given a $1$-periodic curve $\f C_{B} \in (\f S^p_h)^2$ with corners $t_i$, for $1\leq i\leq n$. Each corner corresponds to a knot of multiplicity $p$. The corners split the curve into segments $\f C_{B,i}$ with $t \in [t_{i-1}, t_i]$, i.e., 
\begin{equation*}\label{segments}
\begin{array}{l}
\f C_{B,1}(t) \colon t \in [t_1, t_2]\\
\f C_{B,2}(t) \colon t \in [t_2, t_3]\\
\vdots\\
\f C_{B,n}(t) \colon t \in [t_{n}, t_1 + 1].
\end{array}
\end{equation*}
We assume that no segment $\f C_{B,i}$ contains any additional corners, i.e., $\f C_{B,i} \in C^1([t_i, t_{i+1}])$ and $\|\f C_{B,i}'\|>0$. To obtain a ring-shaped parameterization for such a curve we need to go through the following steps, which are also visualized in Figure~\ref{special-case}:
\begin{itemize}
	\item For each segment $\f C_{B,i}$ we define a corresponding parameter interval $\f X_i^F$, with 
	\begin{equation*}
	\f X_i^F = \left\{
	\begin{array}{ll}
	\left[t_i+\delta,t_{i+1}-\delta \right] &\mbox{if both $t_i$ and $t_{i+1}$ are convex},\\
	\left[t_i+\delta,t_{i+1} \right] &\mbox{for $t_i$ convex, $t_{i+1}$ non-convex},\\
	\left[t_i,t_{i+1}- \delta \right] &\mbox{for $t_i$ non-convex, $t_{i+1}$ convex},\\
	\left[t_i,t_{i+1} \right] &\mbox{if both $t_i$ and $t_{i+1}$ are non-convex}.
	\end{array}\right.
	\end{equation*}
	We restrict the spline space $\f {S}^{p}_h$ to $\f X_i^F$, which is denoted by $\f{S}^{p}_h|_{\f X_i^F}$. We have given a quasi-normal vector $\f q_i$ for each segment $\f C_{B,i}$. The quasi-normals are not continuous at the corners, i.e., in general $\f q_{i-1}(t_i)\neq \f q_i(t_i)$.
	\item If $t_i$ is a convex corner, we construct a patch parameterization $\f P_i$ covering a neighborhood of the corner as a Coons patch interpolating the boundary curves 
	\begin{equation*}\label{cond-coons}
	\begin{array}{ll}
	\f P_i(u,0) &= \f C_{B,i-1} (t_i-u \cdot \delta )\\
	\f P_i(u,1) &= \f C_{B,i} (t_i+ \delta ) + u \cdot \mu_i(t_i + \delta) \, \f q_{i} (t_i + \delta)\\
	\f P_i(0,v) &= \f C_{B,i} (t_i+v \cdot \delta )\\
	\f P_i(1,v) &= \f C_{B,i-1} (t_i- \delta ) + v \cdot \mu_{i-1}(t_i - \delta)\, \f q_{i-1} (t_i - \delta).
	\end{array}
	\end{equation*}
	\item If $t_{i}$ is non-convex, we construct a patch parameterization $\f P_{i}$ as a parallelogram from given quasi-normal vectors $\f q_{i}(t_{i})$ and $\f q_{i -1}(t_{i})$.
	\item We apply the generalized offsetting algorithm to the segment $\f C_{B,i}$ on the parameter interval $\f X_i^F$ and obtain a parameterization $\tilde{\f{F}}_i:[0,1] \times \f X_i^F \rightarrow \mathbb{R}^2$. Instead of periodicity constraints, we now have to satisfy constraints to enforce continuity between the parameterizations $ \tilde{\f{F}}_i $ and the corner patches.
	\item For $  s \in [0,1] \mbox{ and } t \in \f X_i^F $, we define a local parameterization 
	\begin{equation*}
	\f{F}_i (s,t) = \f{C}_{B,i}(t) \cdot (1-s) + \f{C}_{I,i}(t) \cdot s,
	\end{equation*}
	such that the inner curve satisfies $\f{C}_{I,i} \in (\f{S}^{p}_h|_{\f X_i^F})^2$.
\end{itemize}
We visualize the construction in Figure~\ref{special-case}, where the corners $t_i$ and $t_{i+1}$ are convex, whereas $t_{i-1}$ is non-convex. Hence, we have $\f X_{i-1}^F = [t_{i-1},t_i-\delta]$ and $\f X_{i}^F = [t_{i}+\delta,t_{i+1}-\delta]$.

\begin{figure}[ht]\centering
	\includegraphics[width=6.5\bw]{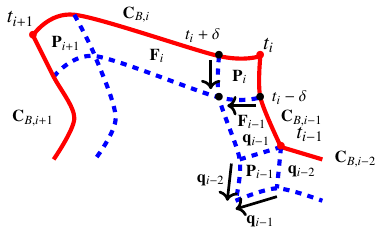}\\
	\includegraphics[width=5.5\bw]{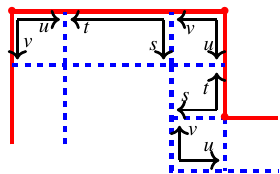}	  
	\caption{A collection of patches (top) and corresponding parameter domains (bottom).}\label{special-case}
\end{figure}

	\begin{figure}[ht]\centering
	\begin{subfigure}{0.9\textwidth} 
		\centering
			\includegraphics[width=4.5\bw]{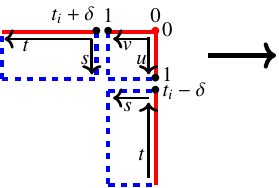}
			\hspace{0.2cm}
		\includegraphics[width=3.4\bw]{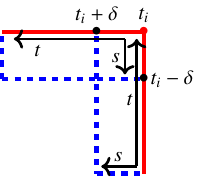}
		\subcaption{ Convex corner.}\label{conv-visualization}
	\end{subfigure}\\
	\begin{subfigure}{0.8\textwidth} 
		\centering
					\includegraphics[width=4.2\bw]{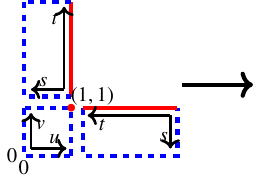}
		\hspace{0.2cm}
		\includegraphics[width=2.7\bw]{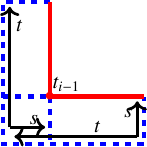}
		\subcaption{Non-convex corner.}\label{nonconv-visualization}
	\end{subfigure}
	\caption{Gluing the local patch parameterizations.}\label{parameter-domain-corner}
\end{figure}
\begin{remark}
	Note that the inner curves need to satisfy
	\begin{equation*}
	\f C_{I,i} (t_i + \delta) = \f C_{I,i-1} (t_i - \delta)
	\end{equation*}
	for each convex corner $t_i$. For each non-convex corner $t_{i-1}$ the angle between $\f q_{i-2}(t_{i-1})$ and $\f q_{i-1}(t_{i-1})$ must be greater than zero.
	Moreover, the value of $\delta$ can be chosen differently for each corner and each direction. Here, for simplicity, we assumed that $\delta$ is the same for all cases.
\end{remark}
As output we obtain a ring-shaped patch as shown in Figure~\ref{special-case} (top). This domain can be computed by creating a sequence of patches that are glued appropriately using a manifold-like structure. The continuity of the corresponding parameter domains is visualized in Figure~\ref{special-case} (bottom). This process is visualized for convex corners in Figure~\ref{conv-visualization} and for non-convex corners in Figure~\ref{nonconv-visualization}. The parameterizations presented in Examples~\ref{heart} and~\ref{drop} (see Figures~\ref{Domain-heart} and~\ref{Domain-drop}) are obtained using the above treatment for non-convex and convex corners, respectively. In both aforementioned examples, we apply the generalized offsetting algorithm to the segments without corners and cover the regions near the corners with appropriate Coons patches.

\section{Overlapping multi-patch formulation and isogeometric discretization}\label{sec:omp}

In this section we assume, for simplicity, that the domain $\Omega$ is constructed as the union of a periodic ring-shaped patch $\Omega^{\cR}$ and a rectangular patch $\Omega^{\cC}$, i.e., 
\[
\Omega = \Omega^{\cR} \cup \Omega^{\cC}.
\]
As a model problem, we consider a Poisson problem on the domain $\Omega$, with zero Dirichlet boundary conditions, which can be represented by a variational formulation as follows.
\begin{problem}\label{weak-poisson}
	Find $u \in H^1_0(\Omega)$, such that
	\begin{equation*}
	a(u,v) = \ell (v) \quad \forall v \in H_0^1(\Omega),
	\end{equation*}
	where
	\begin{equation*}\label{bilinear-poisson}
	a(u ,v) = \int_{\Omega} \nabla u \, \nabla v \,\mathrm d\boldsymbol{\xi}
	\quad
	\mbox{and}
	\quad
	\ell(v) = \int_{\Omega} f v \,\mathrm d\boldsymbol{\xi}.
	\end{equation*}
\end{problem}
We adopt the notation and definitions from~\cite{KARGARAN}. For $k \in \{\cR,\cC\}$ we define the following extension operator 
\begin{equation*}\label{assumptions2}
M^k v =
\begin{cases}
v  \quad \mbox{ on } \Gamma_C^k\\
0 \quad \mbox{ on } \Gamma_D^k, 
\end{cases}
\end{equation*} 
where, $\Gamma_C^k$ and $\Gamma_D^k$ are called the coupling and Dirichlet boundaries, respectively.

For $(u^\cR,u^\cC),(v^\cR,v^\cC) \in H^1(\Omega^\cR) \times H^1(\Omega^\cC)$, we define
\begin{equation*}
\mathcal{A}((u^\cR,u^\cC),(v^\cR,v^\cC)) = a^\cR(u^\cR,v^\cR)+a^{\cC}(u^\cC,v^\cC)
\end{equation*}
and
\begin{equation*}
\mathcal{L}(v^\cR,v^\cC) = \ell^\cR(v^\cR)+\ell^{\cC}(v^\cC),
\end{equation*}
where 
\begin{equation*}
a^k(u ,v) = \int_{\Omega^k} \nabla u \, \nabla v \,\mathrm d\boldsymbol{\xi}
\end{equation*}
and
\begin{equation*}
\ell^k(v) = \int_{\Omega^k} f v \,\mathrm d\boldsymbol{\xi}, \quad k \in \{ \cR , \cC \}.
\end{equation*}
Moreover, we introduce functions $u^k_0 \in H^1_0(\Omega^k)$, satisfying
\begin{equation}\label{solution}
u^k= u^k_0 + M^k u^{k'} \quad k \in \{ \cR,\cC \},
\end{equation}
where $k'$ such that $\{ k,k' \} = \{ \cR,\cC \}$. 
In~\cite{KARGARAN} it was shown that equation~\eqref{solution} is solvable under mild conditions on $M^\cR$ and $M^\cC$. We obtain the following coupled problem.
\begin{problem}\label{var2}
	Find $(u^{\cR}_0,u^{\cC}_0) \in H_0^1(\Omega^{\cR}) \times H_0^1(\Omega^{\cC}) $ and $(u^{\cR}_M,u^{\cC}_M) \in H^1(\Omega^{\cR}) \times H^1(\Omega^{\cC}) $ such that
	\begin{eqnarray}\centering 
	\mathcal{A}((u_0^{\cR} + u^{\cR}_M,u_0^{\cC} + u^{\cC}_M),(v^{\cR}_0,v^{\cC}_0)) &=&  \mathcal{ L}(v^{\cR}_0,v^{\cC}_0),\label{variationaleq444} \\
	u^{\cR}_M - M^{\cR} (u^{\cC}_0 + u^{\cC}_M) &\equiv& 0 \label{variationaleq445}\\
	u^{\cC}_M - M^{\cC} (u^{\cR}_0 + u^{\cR}_M) &\equiv& 0, \label{variationaleq446}
	\end{eqnarray}
for all $ \forall  (v^{\cR}_0,v^{\cC}_0) \in H_0^1(\Omega^{\cR}) \times H_0^1(\Omega^{\cC})  $. 
\end{problem}
In Problem~\ref{var2}, we obtained the coupling constraints~\eqref{variationaleq445} and \eqref{variationaleq446}
by replacing $M^k u^{k'}$ with $u^k_M$ in~\eqref{solution}.

In the following we summarize the isogeometric discretization of Problem~\ref{var2}. Since $\Omega^{\cR}$ is a periodic ring-shaped patch and $\Omega^{\cC}$ is rectangular, we use standard periodic isogeometric functions for the discretization of the functions corresponding to $\Omega^{\cR}$ and standard B-spline basis functions for the functions corresponding to the rectangular patch $\Omega^{\cC}$. We assume that $\f G^{\cR}$ is a spline geometry mapping, which is periodic in the first direction, mapping the parameter domain $\widehat\Omega^{\cR} = \left[0,1\right[ \times \left]0,1\right[$ onto the physical subdomain $\Omega^{\cR}$, i.e., $\Omega^{\cR} = \f G^{\cR}(\widehat{\Omega}^{\cR})$. We assume that the rectangular patch $\Omega^{\cC}$ has a parameterization $\f G^{\cC} : \left]0,1\right[^2 \rightarrow \Omega^{\cC}$, with $\f G^{\cC} \in \mathbb{P}^{(1,1)}$. The isogeometric spaces can then be defined as
\begin{equation*}
\begin{split}
V_{h}^{\cR} &=\mbox{span} \left\{ \varphi\in L^2(\Omega^{\cR}): \varphi \circ \f G^{\cR} \in \mathcal{S}^p_{h,\mathrm{per}} \times \mathcal{S}^p_{h} \right\},\\
V_{h}^{\cC}&=\mbox{span} \left\{ \varphi\in L^2(\Omega^{\cR}): \varphi \circ \f G^{\cC} \in \mathcal{S}^p_{h} \times \mathcal{S}^p_{h} \right\},
\end{split}
\end{equation*}
where $\mathcal{S}^p_{h,\mathrm{per}}$ denotes a periodic spline space. We define the interior functions for each subdomain as 
\begin{equation*}
\begin{array}{ll}
V_{0h}^{\cR} & = V_{h}^{\cR} \cap H_0^1(\Omega^{\cR}),\\
V_{0h}^{\cC} & = V_{h}^{\cC} \cap H_0^1(\Omega^{\cC}).
\end{array}
\end{equation*}
The spaces of coupling functions are defined as follows
\begin{eqnarray*}
	V_{c h}^k &=& \mbox{span} \left\{ \beta_i^k \in V_h^ k \;|\; i\in \cI_c^k \right\} \subset H^1(\Omega^k), \mbox{with } \\
	\mathcal{I}^k_{c }  &=& \left\{ i \in \mathcal{I}^k \;|\; \mbox{supp}\beta_i^k \cap \Gamma_C^k \neq \emptyset \right\},
\end{eqnarray*}
where $k \in \{ \mathcal{R}, \mathcal{C}\}$. 
Hence, Problem~\ref{var2} can be discretized as follows.
\begin{problem}\label{discretevar2}
	Find $(u_{0h}^\mathcal{R},u_{0h}^\mathcal{C}) \in V_{0 h}^\mathcal{R} \times V_{0h}^\mathcal{C} $ and  $(u^1_{Mh},u^2_{Mh}) \in V_{c h}^\mathcal{R} \times V_{c h}^\mathcal{C} $ such that
	\begin{eqnarray*}
	\mathcal{A}(u_{0h}^\mathcal{R}  + u^\mathcal{R} _{Mh},u_{0h}^\mathcal{C} + u^\mathcal{C}_{Mh}),(v^\mathcal{R} _{0h},v^\mathcal{C}_{0h})) &=&   \mathcal{L}(v^\mathcal{R} _{0h},v^\mathcal{C}_{0h}) \label{variationaleq516} \\
	u^\mathcal{R} _{Mh} - M_h^\mathcal{R}  (u_{0h}^\mathcal{C} + u^ \mathcal{C}_{Mh}) &=& 0  \quad \mbox{ on }  \Omega^\mathcal{R}  \label{variationaleq517}\\
	u^\mathcal{C}_{Mh} - M_h^\mathcal{C} (u_{0h}^\mathcal{R}  + u^\mathcal{R} _{Mh}) &=& 0 \quad \mbox{ on }  \Omega^\mathcal{C} \label{variationaleq518}
	\end{eqnarray*}
for all $ (v^\mathcal{R} _{0h},v^\mathcal{C}_{0h}) \in V_{0 h}^\mathcal{R} \times V_{0h}^\mathcal{C} $.
\end{problem}
In Problem~\ref{discretevar2}, $M_h^\mathcal{R}$ and $M_h^\mathcal{C}$ are suitable discretizations of the operators $M^\mathcal{R}$ and $M^\mathcal{C}$. 
We assume that the discretized extension operators are \emph{collocation-based extension operators} (CEO) as defined in~\cite[Section 4.4]{KARGARAN}.

\section{Numerical experiments}\label{Numerical-examples}

In all numerical experiments we solve the Poisson problem using the OMP method. Except for Example~\ref{peanut}, we always consider the exact solution
\begin{equation*}\label{exact-solution}
u(x,y) = \sin(\pi  x) \sin(\pi y).
\end{equation*}
The domain parameterizations were created using the generalized offsetting algorithm presented in Section~\ref{sec-algorithm-1}. In Examples~\ref{peanut} and~\ref{new-star} we consider domains with a smooth boundary. In Examples~\ref{heart} and~\ref{drop} we consider domains with corners that are convex (inner angle $<\pi$) and non-convex (inner angle $>\pi$), respectively.

\begin{example}\label{peanut}
We consider the peanut shaped domain from Example~\ref{example-with-normal}. The overlapping patches are depicted in Figure~\ref{Domain-peanut}. The parameterization of the ring-shaped patch is the one depicted in Figure~\ref{peanut-result102}, parameterized by periodic quadratic B-splines. We solve for the exact solution
	\begin{equation*}
	u(x,y) = \sin( x) \sin( y).
	\end{equation*}
	In Figure~\ref{localfighHalfCirc_NumSol0-peanut} we plot the numerical solution for a total of 1080 DOFs and using quadratic B-splines. $L^2$ and $H^1$ errors are shown in Figure~\ref{fighHalfCirc_H^1err0-peanut}. 
	The observed convergence rates are optimal.
\end{example}

\begin{figure}[ht]\centering
	\footnotesize
	\begin{subfigure}{0.8\textwidth} 
		\centering
					\includegraphics[width=9\bw]{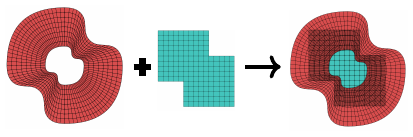}
		\subcaption{ Peanut shaped domain inside curve $\f C_B^1$.}\label{Domain-peanut}
	\end{subfigure}
	\\
	\begin{subfigure}{0.8\textwidth} 
		\centering
		\includegraphics[width=9\bw]{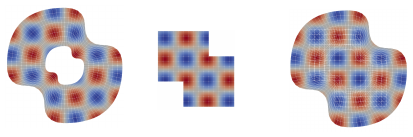}	
		\subcaption{Local solutions (left and center), both solutions plotted together (right).}\label{localfighHalfCirc_NumSol0-peanut}
	\end{subfigure}\\[6pt]
	\begin{subfigure}{0.9\textwidth} 
		\centering
			\includegraphics[width=5\bw]{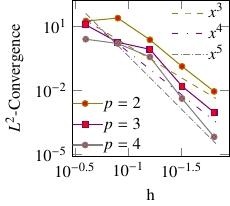}%
			\hspace{10pt}
				\includegraphics[width=5\bw]{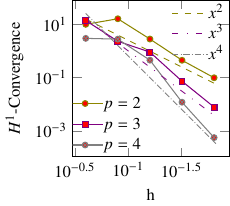}
		\subcaption{$L^2$  (left) and $H^1$ (right) errors for $p=2,3,4$.}\label{fighHalfCirc_H^1err0-peanut}
	\end{subfigure}
	\caption{Numerical results for solving the Poisson problem on a peanut shaped domain.}
\end{figure} 

\begin{example}\label{new-star}
	We consider the domain from Example~\ref{remark-example}. The parameterization of the ring-shaped patch is the one depicted in Figure~\ref{new-ex2}, parameterized by periodic cubic B-splines. The resulting OMP representation is depicted in Figure~\ref{Domain-shaped-boundary0}.
	
	We show the numerical solution for a total of 2263 DOFs and using cubic B-splines, in Figure~\ref{localfighHalfCirc_NumSol0-shaped-boundary0}. The $L^2$ and $H^1$ errors for discretizations of degree $p=3$ and $p=4$ are shown in Figure~\ref{fighHalfCirc_H^1err0-shaped-boundary0}. All convergence rates are optimal.
\end{example}

\begin{figure}[ht]\centering
	\footnotesize
	\begin{subfigure}{0.8\textwidth} 
		\centering
			\includegraphics[width=9\bw]{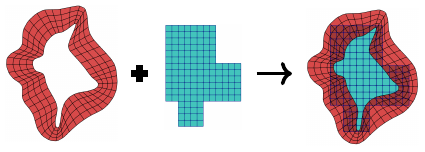}
		\subcaption{Domain inside curve $\f C_B^4$.}
		\label{Domain-shaped-boundary0}
	\end{subfigure}
	\\
	\begin{subfigure}{0.8\textwidth} 
		\centering\includegraphics[width=9\bw]{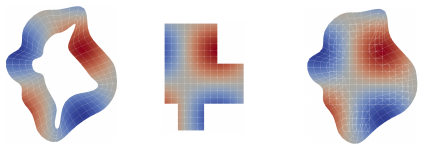}	
		\subcaption{Local solutions (left and center), both solutions plotted together (right).}\label{localfighHalfCirc_NumSol0-shaped-boundary0}
	\end{subfigure}
	\\[6pt]
	\begin{subfigure}{0.9\textwidth} 
		\centering
			\includegraphics[width=5\bw]{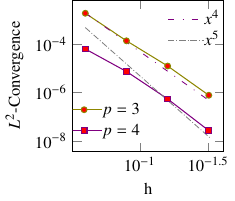}%
			\hspace{10pt}
				\includegraphics[width=5\bw]{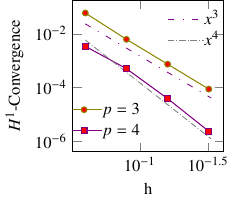}	
		\subcaption{$L^2$  (left) and $H^1$ (right) errors for $p = 3,4$.}\label{fighHalfCirc_H^1err0-shaped-boundary0}
	\end{subfigure}
	\caption{Numerical results for solving the Poisson problem on the domain with a random smooth boundary.}
\end{figure}
\begin{example}\label{heart}
	We consider a heart shaped domain, which is illustrated in Figure~\ref{Domain-heart}. In this example, the geometry is a non-convex domain with a corner near the top. The construction of the ring-shaped patch is done as explained in Section~\ref{sec:curves-with-corners}. The ring-shaped subdomain is parameterized with quadratic B-splines.
	
	The numerical solution for a total of 1460 DOFs and using quadratic B-splines is illustrated in Figure~\ref{localfighHalfCirc_NumSol0-heart}. $L^2$ and $H^1$ errors are shown in Figure~\ref{fighHalfCirc_H^1err0-heart}. 
	The convergence rates of all errors are optimal.
\end{example}

\begin{figure}[ht]\centering
	\footnotesize
	\begin{subfigure}{0.8\textwidth} 
		\centering
			\includegraphics[width=9\bw]{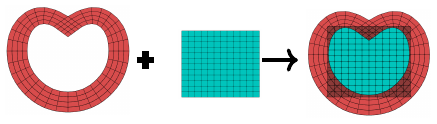}	
		\subcaption{Heart shaped domain.}\label{Domain-heart}
	\end{subfigure}
	\\
	\begin{subfigure}{0.8\textwidth} 
			\centering\includegraphics[width=9\bw]{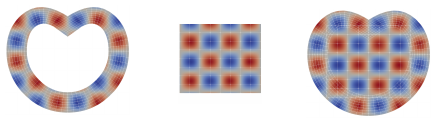}	
		\subcaption{Local solutions (left and center), both solutions plotted together (right).}\label{localfighHalfCirc_NumSol0-heart}
	\end{subfigure}
	\\[6pt]
	\begin{subfigure}{0.9\textwidth} 
		\centering
			\includegraphics[width=5.1\bw]{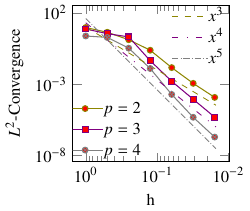}%
				\hspace{10pt}\includegraphics[width=5.1\bw]{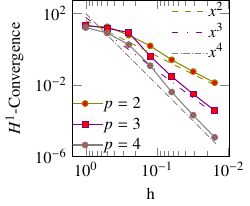}		
		\subcaption{$L^2$  (left) and $H^1$ (right) errors for $p=2,3,4$.}\label{fighHalfCirc_H^1err0-heart}
	\end{subfigure}
	\caption{Numerical results for solving the Poisson problem on a heart shaped domain.}
\end{figure} 
\begin{example}\label{drop}
	The drop shaped domain illustrated in Figure~\ref{Domain-drop} is composed of two overlapping patches. The geometry is convex with a corner at the top.
	The ring-shaped part is constructed with quadratic B-splines according to the strategy in Section~\ref{sec:curves-with-corners}.
	
	In Figure~\ref{localfighHalfCirc_NumSol0-drop} we show the numerical solution for a total of 1460 DOFs and using quadratic B-splines. $L^2$ and $H^1$ errors are shown in Figure~\ref{fighHalfCirc_H^1err0-drop}. 
	The observed convergence rates of all errors are optimal.
\end{example}

\begin{figure}[ht]\centering
	\footnotesize
	\begin{subfigure}{0.8\textwidth} 
		\centering
			\includegraphics[width=8.5\bw]{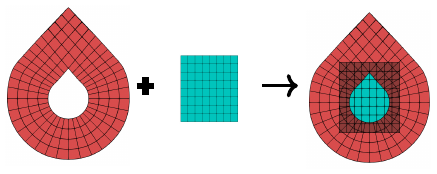}
		\subcaption{Drop shaped domain.}\label{Domain-drop}
	\end{subfigure}
	\\
	\begin{subfigure}{0.8\textwidth} 
		\centering
			\includegraphics[width=8.5\bw]{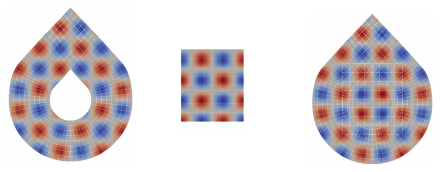}
		\subcaption{Local solutions (left and center), both solutions plotted together (right).}\label{localfighHalfCirc_NumSol0-drop}
	\end{subfigure}\\[6pt]
	\begin{subfigure}{0.9\textwidth} 
		\centering
			\includegraphics[width=5.1\bw]{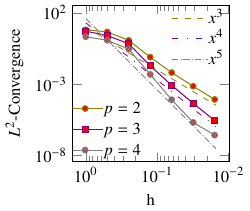}%
			\hspace{10pt}
				\includegraphics[width=5.1\bw]{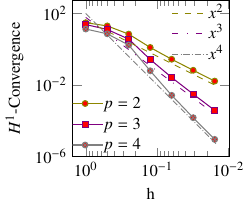}
		\subcaption{$L^2$ (left) and $H^1$ (right) errors for $p=2,3,4$.}\label{fighHalfCirc_H^1err0-drop} 
	\end{subfigure}
	\caption{Numerical results for solving the Poisson problem on a drop shaped domain.}
\end{figure}

\section{Conclusion}
In this paper, we proposed an offset-based overlapping domain parameterization (OODP) method for IGA.
In this method we generate an inner offset curve from a given regularly parameterized boundary curve by solving a regularized optimization problem. We penalize first and/or second derivatives to make the curve as smooth as possible. From this inner curve and the given outer curve we obtain a ring-shaped patch with a hole. This hole is then covered with a multi-cell domain.

We study the influence of the parameters in the proposed algorithm via several numerical examples. We show that using a quasi-normal vector, instead of the exact normal vector, for defining the inner offset curve in the algorithm generates a better parameterization for most of the considered cases. In addition, we apply the algorithm to boundary curves with convex and non-convex corners and we explain how the method can handle such cases. 
Finally, we use the OMP method as introduced in~\cite{KARGARAN} to solve PDEs on the proposed domain parameterizations. 

The OODP method can be applied to a wide range of given boundary curves in 2D. As we have demonstrated, it is always possible to cover complex domains with pairwise overlapping subdomains. However, the OODP method requires proper tuning of the involved parameters and a suitable choice for the quasi-normal vector. Hence, it is of vital interest to study the dependence of the method on its parameters and to devise a suitable parameter adjustment strategy.

In many cases, the parameterization strategy becomes more straightforward if we allow to have multiple overlaps (where three or more subdomains are overlapping). Therefore, for solving a PDE on such parameterizations, the OMP method, which is defined for pairwise overlaps, needs to be extended to configurations having multiple overlaps. We intend to extend the method in such a way in the future.

The OMP method proposed in \cite{KARGARAN} is developed for the discretization of second order PDEs. Solving higher order PDEs requires higher order coupling between the patches, e.g. for fourth order problems $C^1$ coupling is needed. Over matching multi-patch domains this leads to additional constraints on the multi-patch parameterization, which require a reparameterization as developed in~\cite{kapl}. An extension of the OMP method to fourth order problems might circumvent this issue.

Even though we only consider B-spline parameterizations in the numerical examples, the proposed method is also applicable for NURBS boundary curves. Moreover, the OODP method can be generalized also to some 3D domains. We want to highlight here the possibility to parameterize 2.5D domains, which are obtained from planar or surface domains by extrusion or sweeping. Moreover, objects that are constructed as Boolean unions of 2.5D domains can also be parameterized using overlapping patches. Such extensions are relatively straightforward and possess numerous applications, as many domains of practical relevance can be constructed in such a way. An extension to the full 3D case is however more complicated, as there are many possible configurations of boundary surfaces, which are in general given as trimmed B-spline or NURBS patches. Therefore, the parameterization of full 3D domains needs further research.
\section{Acknowledgments}

The research was supported by the strategic program ``Innovatives O\"O 2010
plus'' by the Upper Austrian Government, by the FWF 
together with the Upper Austrian Government through the
project P~30926-NBL, by Linz Institute of Technology and the government of Upper Austria through the project LIT-2019-8-SEE-116, by the Austrian Ministry for Transport, Innovation and Technology (BMVIT), the Federal Ministry for Digital and Economic Affairs (BMDW), and the Province of Upper Austria in the frame
of the COMET-Competence Centers for Excellent Technologies Program managed by Austrian Research Promotion Agency FFG, the COMET Module S3AI and by the "Austrian COMET-Programme" (Project InTribology, no. 872176).

\appendix
\section{Proof of Theorem~\ref{mu-max-theorem}}\label{prooftheorem1}

\begin{proof}
The parameterization~$\tilde{\f{F}}$ is given as in~\eqref{parameterization}, where 
\begin{equation*}
\f{C}_{B} = ( C^1(t) , C^2(t))\quad \mbox{ and } \quad \f{q}(t) = ( q^1(t) , q^2(t)).
\end{equation*}
We have that~$\tilde{\f{F}}$ is regular, if the Jacobian determinant of the mapping is always negative. We obtain
\begin{equation*}
\frac{\partial \tilde{\f{F}}(s,t)}{\partial s} = \mu(t) \f q(t)
\end{equation*}
and
\begin{equation*}
\frac{\partial \tilde{\f{F}}(s,t)}{\partial t} = \f{C}_B'(t) + s (\mu'(t) \f q(t) + \mu(t) \f q'(t)),   
\end{equation*}
Therefore, the Jacobian matrix of $\tilde{\f{F}}(s,t) $ can be written as follows
\begin{equation*}
J = \left(
\begin{array}{cc}
\mu(t) \f q(t) & \f{C}_B'(t) + s (\mu'(t) \f{q}(t) + \mu(t) \f{q}'(t))
\end{array}
\right).
\end{equation*}
We denote the determinant of $J$ by $\det(J) = D$. 
In the following we obtain 
\begin{eqnarray*}
D &=& \mu(t) \det(\f q(t) ,  \f{C}_B'(t))+ (\mu(t)  s \mu'(t) )\underbrace{\det(\f{q}(t) ,\f{q}(t) ) }_{= 0}\\
&+& s \mu^2(t) \det(\f{q}(t) ,\f{q'}(t)) 
\end{eqnarray*}
which should be negative for all $s\in[0,1]^2$ and for all $t\in\mathbb{R}$, i.e., 
\begin{equation}\label{start-upper-bound}
\mu(t) \det(\f q(t) ,  \f{C}_B'(t)) + s\mu^2(t) \det(\f{q}(t) ,\f{q}'(t) ) < 0.
\end{equation}
Equation~\eqref{start-upper-bound} is linear with respect to $ s $. Hence, it suffices to satisfy the equation for $s=0$ and $s=1$. For $s = 0$ and we obtain
\begin{equation*}
\mu(t) \det(\f q(t) ,  \f{C}_B'(t)) < 0,
\end{equation*}
Since $\det(\f q(t) ,  \f{C}_B'(t)) $ is negative, we need $\mu(t) > 0$.
For $s = 1$ we get
\begin{equation*}
\mu(t) \det(\f q(t) ,  \f{C}_B'(t)) + \mu^2(t) \det(\f{q}(t) ,\f{q'}(t) ) < 0,
\end{equation*}
or
\begin{equation*}
\mu(t) \det(\f{q}(t) ,\f{q'}(t))  < - \det(\f q(t) ,  \f{C}_B'(t)) .
\end{equation*}
If 
\[
\det(\f{q}(t) ,\f{q'}(t)) > 0
\]
we obtain
\begin{equation*}
\mu(t)  <  \frac{\det(\f{C}_B'(t),\f q(t)) }{\det(\f{q}(t) ,\f{q'}(t))}.
\end{equation*}
On the other hand, if 
\[
\det(\f{q}(t) ,\f{q'}(t)) \leq 0
\]
we have
\begin{equation*}
\mu(t) \det(\f{q}(t) ,\f{q'}(t))  \leq 0 < -\det(\f q(t),\f{C}_B'(t))
\end{equation*}
which is satisfied for all $t$. This completes the proof.
\end{proof}

\end{document}